\documentclass[a4paper, 10pt]{article}

\usepackage[T1]{fontenc}
\usepackage[latin1]{inputenc}
\usepackage{lmodern}  
\usepackage[english]{babel}
\usepackage{csquotes} 

\usepackage[scale=0.73, a4paper]{geometry}
\usepackage{textcomp} 
\usepackage{graphicx} 
\usepackage{amsmath} 
\usepackage{amsthm} 
\usepackage{amssymb} 
\usepackage{amsfonts}
\usepackage{mathtools}
\usepackage{thmtools} 
\usepackage{mathrsfs} 
\usepackage{mathdots} 
\usepackage{wasysym} 
\usepackage{extpfeil} 
\usepackage{verbatim} 
\usepackage{longtable} 
\usepackage{bigstrut} 
\usepackage[table, dvipsnames]{xcolor} 
\usepackage{floatpag} 
\usepackage{enumitem} 
\usepackage{xparse} 
\usepackage{mathscinet} 
\usepackage{adjustbox} 
\usepackage[obeyDraft]{todonotes} 

\usepackage[final,                  
            colorlinks,             
            plainpages = false,     
            linktoc=all,            
            linkcolor=black,        
            citecolor=black,         
            urlcolor=black,          
            filecolor=black         
            ]{hyperref}

\usepackage[citestyle=alphabetic,   
            bibstyle=alphabetic,    
            maxbibnames=50,         
            maxcitenames=3,         
            autocite=inline,        
            block=space,            
            hyperref=true,          
            backref=false,           
            backrefstyle=three,     
            date=short,             
            arxiv=pdf,              
            isbn=false,             
            url=false,              
            doi=false,               
            eprint=true,            
            giveninits=true,        
            sorting=nyt,            
            backend=bibtex8,
            ]{biblatex}

\setcounter{biburlnumpenalty}{9000}
\setcounter{biburllcpenalty}{7000}
\setcounter{biburlucpenalty}{8000}

\bibliography{ind}

\hypersetup{pdftitle = {On the Induction of p-Cells},
            pdfauthor = {Lars Thorge Jensen, Leonardo Patimo},
            pdfkeywords = {p-canonical basis} {Hecke algebra} {p-Cells} {parity sheaves},
            pdfdisplaydoctitle      
            }
            
\usepackage[arrow, matrix, curve]{xy} 
\usepackage{tikz,tikz-cd} 
\usetikzlibrary{arrows, arrows.meta, bending, fit, intersections, calc, external, shapes.misc, shapes.geometric, decorations.markings, decorations.pathreplacing}
\usepackage{pgfplots}
\pgfplotsset{compat=1.14}


\allowdisplaybreaks 

\setkeys{Gin}{draft=false}
            
\declaretheoremstyle[
spaceabove=\topsep, spacebelow=\topsep,
headfont=\normalfont\bfseries,
notefont=\mdseries, notebraces={(}{)},
bodyfont=\itshape,
postheadspace=\newline
]{break}

\declaretheoremstyle[
spaceabove=\topsep, spacebelow=\topsep,
headfont=\normalfont\bfseries,
notefont=\mdseries, notebraces={}{},
bodyfont=\itshape,
postheadspace=\newline
]{refbreak}


\declaretheorem[title=Theorem, style=plain, numberwithin=section]{thm}
\declaretheorem[title=Proposition, style=plain, numberlike=thm]{prop}
\declaretheorem[title=Lemma, style=plain, numberlike=thm]{lem}
\declaretheorem[title=Corollary, style=plain, numberlike=thm]{cor}

\declaretheorem[title=Theorem, style=plain, numbered=no]{thm*}

\declaretheorem[title=Theorem, style=break, numberlike=thm]{thmlab}

\declaretheorem[title=Definition, style=definition, numberlike=thm]{defn}

\declaretheorem[title=Remark, style=remark, numberlike=thm]{remark}

\declaretheorem[title=Example, style=remark, numberlike=thm]{ex}

\usepackage[capitalize]{cleveref} 
                          
\crefname{thm}{Theorem}{Theorems}
\crefname{prop}{Proposition}{Propositions}
\crefname{lem}{Lemma}{Lemmata}
\crefname{cor}{Corollary}{Corollaries}
\crefname{rem}{Reminder}{Reminders}
\crefname{defn}{Definition}{Definitions}

\crefname{thmlab}{Theorem}{Theorems}
\crefname{proplab}{Proposition}{Propositions}
\crefname{lemlab}{Lemma}{Lemmata}
\crefname{corlab}{Corollary}{Corollaries}
\crefname{remlab}{Reminder}{Reminders}
\crefname{conj}{Conjecture}{Conjectures}

\crefname{thmreflab}{Theorem}{Theorems}
\crefname{propreflab}{Proposition}{Propositions}
\crefname{lemreflab}{Lemma}{Lemmata}
\crefname{correflab}{Corollary}{Corollaries}
\crefname{remreflab}{Reminder}{Reminders}
\crefname{conjref}{Conjecture}{Conjectures}

\crefname{remark}{Remark}{Remarks}
\crefname{claim}{Claim}{Claims}
\crefname{ex}{Example}{Examples}

\crefname{section}{Section}{Sections}
\crefname{figure}{Figure}{Figures}
\crefname{equation}{}{}
\crefname{ass}{Assumption}{Assumptions}

\CompileMatrices




\entrymodifiers={+!!<0pt,\fontdimen22\textfont2>} 

\delimitershortfall=-1pt 


\def\clap#1{\hbox to 0pt{\hss#1\hss}}

\makeatletter
\def\underbracket{%
    \@ifnextchar[{\@underbracket}{\@underbracket [\@bracketheight]}%
}
\def\@underbracket[#1]{%
    \@ifnextchar[{\@under@bracket[#1]}{\@under@bracket[#1][0.4em]}%
}
\def\@under@bracket[#1][#2]#3{
    \mathop{\vtop{\m@th \ialign {##\crcr $\hfil \displaystyle {#3}\hfil $%
    \crcr \noalign {\kern 3\p@ \nointerlineskip }\upbracketfill {#1}{#2}
    \crcr \noalign {\kern 3\p@ }}}}\limits}

\def\upbracketfill#1#2{$\m@th \setbox \z@ \hbox {$\braceld$}
    \edef\@bracketheight{\the\ht\z@}\bracketend{#1}{#2}
    \leaders \vrule \@height #1 \@depth \z@ \hfill
    \leaders \vrule \@height #1 \@depth \z@ \hfill \bracketend{#1}{#2}$}

\def\bracketend#1#2{\vrule height #2 width #1\relax}
\makeatother


\makeatletter
\def\thmt@refnamewithcomma #1#2#3,#4,#5\@nil{%
  \@xa\def\csname\thmt@envname #1utorefname\endcsname{#3}%
  \ifcsname #2refname\endcsname
    \csname #2refname\expandafter\endcsname\expandafter{\thmt@envname}{#3}{#4}%
  \fi
}
\makeatother

\makeatletter
\newcommand*\rel@kern[1]{\kern#1\dimexpr\macc@kerna}
\newcommand*\widebar[1]{%
  \begingroup
  \def\mathaccent##1##2{%
    \rel@kern{0.8}%
    \overline{\rel@kern{-0.8}\macc@nucleus\rel@kern{0.2}}%
    \rel@kern{-0.2}%
  }%
  \macc@depth\@ne
  \let\math@bgroup\@empty \let\math@egroup\macc@set@skewchar
  \mathsurround\z@ \frozen@everymath{\mathgroup\macc@group\relax}%
  \macc@set@skewchar\relax
  \let\mathaccentV\macc@nested@a
  \macc@nested@a\relax111{#1}%
  \endgroup
}
\makeatother

\makeatletter
\newcommand{\subjclass}[2][1991]{%
  \let\@oldtitle\@title%
  \gdef\@title{\@oldtitle\footnotetext{#1 \emph{Mathematics subject classification.} #2}}%
}
\newcommand{\keywords}[1]{%
  \let\@@oldtitle\@title%
  \gdef\@title{\@@oldtitle\footnotetext{\emph{Key words and phrases.} #1.}}%
}
\makeatother

\makeatletter
\newcommand{\extp}{\@ifnextchar^\@extp{\@extp^{\,}}}
   \def\@extp^#1{\mathop{\bigwedge\nolimits^{\!#1}}}
\makeatother


\setcounter{tocdepth}{2} 

\DeclareMathOperator{\Hom}{Hom}
\DeclareMathOperator{\End}{End}

\DeclareMathOperator{\id}{Id}

\DeclareMathOperator{\pr}{pr}

\DeclareMathOperator{\ch}{ch}

\DeclareMathOperator{\rk}{rk}


\newcommand{\defeq}{\ensuremath{\coloneqq}}

\newcommand{\Z}{\ensuremath{\mathbb{Z}}}
\newcommand{\N}{\ensuremath{\mathbb{N}}}
\newcommand{\Q}{\ensuremath{\mathbb{Q}}}

\newcommand{\C}{\ensuremath{\mathbb{C}}}

\newcommand{\p}[1]{\ensuremath{{}^{p} #1}}
\newcommand{\pre}[2]{\ensuremath{{}^{#1} #2}}

\newcommand{\heck}[1][]{\ensuremath{\mathcal{H}_{#1}}}

\newcommand{\std}[2][H]{\ensuremath{#1_{#2}}}
\newcommand{\kl}[2][H]{\ensuremath{\underline{#1}_{#2}}}
\DeclareDocumentCommand{\pcan}{O{p} O{H} m}{\ensuremath{{}^{#1}\underline{#2}_{#3}}}

\DeclareDocumentCommand{\mixCan}{O{p} O{H} m m}{\ensuremath{{}^{#1}\underline{#2}_{#3}{#2}_{#4}}}

\newcommand{\cle}[2][p]{\ensuremath{\, \overset{#1}{\underset{#2}{\leqslant}} \,}}

\newcommand{\clt}[2][p]{\ensuremath{\, \overset{#1}{\underset{#2}{<}} \,}}

\newcommand{\ceq}[2][p]{\ensuremath{\, \overset{#1}{\underset{#2}{\sim}} \,}}
\newcommand{\mle}[2][p]{\ensuremath{\, \overset{#1}{\underset{#2}{\sqsubseteq}} \,}}
\newcommand{\mlt}[2][p]{\ensuremath{\, \overset{#1}{\underset{#2}{\sqsubset}} \,}}

\newcommand{\rt}[1][]{\ensuremath{\alpha_{#1}}}
\newcommand{\cort}[1][]{\ensuremath{\alpha_{#1}^{\vee}}}
\newcommand{\rts}{\ensuremath{\Phi}}

\newcommand{\charlat}{\ensuremath{X}}
\newcommand{\cocharlat}{\ensuremath{X^{\vee}}}

\newcommand{\Wid}{\ensuremath{\id}}

\newcommand{\Fl}[1][]{\ensuremath{\mathcal{F}l_{#1}}}

\newcommand{\parity}[1][]{\ensuremath{\mathbf{Parity}_{#1}}}

\newcommand{\Homnl}[2][]{
   \ifthenelse{ \equal{#1}{} } { \ensuremath{\Hom_{\nless #2}} } 
   { \ensuremath{\Hom_{\nless #2, #1}} } }
\newcommand{\Endnl}[2][]{
   \ifthenelse{ \equal{#1}{} } { \ensuremath{\End_{\nless #2}} } 
   { \ensuremath{\End_{\nless #2, #1}} } }

\renewcommand{\cal}[1]{\mathcal{#1}}

\newcommand{\llrrbracket}[1]{
  [\![#1]\!]}
  
\newcommand{\llrrparen}[1]{
  (\!(#1)\!)}

\newcommand{\Address}{
  \bigskip{\footnotesize
  \textsc{Universit\'{e} Clermont Auvergne, CNRS, LMBP, F-63178 Aubi\`ere, France}\par\nopagebreak
  \textit{E-mail address}, Lars~Thorge~Jensen: \texttt{lars\_thorge.jensen@uca.fr}
}

  \bigskip{\footnotesize
  \textsc{Albert-Ludwigs-Universit\"at Freiburg, D-79104 Freiburg im Breisgau, Germany}\par\nopagebreak
  \textit{E-mail address}, Leonardo~Patimo: \texttt{leonardo.patimo@math.uni-freiburg.de}
}}

\tikzset{%
  DynNode/.style={circle, inner sep=2pt, draw=black, fill=white},
  Greater/.style={pos=0.65, inner sep=0mm, outer sep=0mm},
  highlight/.style={rectangle,rounded corners,fill=red!15,draw=red,
    fill opacity=0.5,thick},
  root/.style={draw, color=black, thick, ->},
  plane/.style={draw, color=black, very thin},
  origin/.style={fill, color=black},
  sline/.style={color=Red, thick},
  tline/.style={color=NavyBlue, thick},
  uline/.style={color=Goldenrod, thick},
  bendBelow/.style={bend left=70, looseness=2},
  bendAbove/.style={bend right=70, looseness=2},
  object/.style={circle, fill, inner sep=1.5pt, outer sep=0mm},
  labelling/.style={outer sep=0mm, inner sep=0mm},
  1morph/.style={->, shorten >= 0.5pt, >=stealth'},
  2morph/.style={-implies,double,double equal sign distance,
                 shorten >=2pt, shorten <=3pt},
  spot/.style={color=black, thin, dashed},
  s/.style={color=Red},
  t/.style={color=NavyBlue},
  u/.style={color=Goldenrod},
  line/.style={draw, line width=2pt},
  dot/.style={fill, thin},
  sph/.style={fill, color=black!20, opacity=0.5},
  squig/.style={decoration=snake, decorate, ->},
  root/.style={draw, color=black, line width=2pt, ->},
  myptr/.style={decoration={markings,mark=at position 1 with %
    {\arrow[scale=3,>=stealth]{>}}},postaction={decorate}},
  on each segment/.style={
    decorate,
    decoration={
      show path construction,
      moveto code={},
      lineto code={
        \path [#1]
        (\tikzinputsegmentfirst) -- (\tikzinputsegmentlast);
      },
      curveto code={
        \path [#1] (\tikzinputsegmentfirst)
        .. controls
        (\tikzinputsegmentsupporta) and (\tikzinputsegmentsupportb)
        ..
        (\tikzinputsegmentlast);
      },
      closepath code={
        \path [#1]
        (\tikzinputsegmentfirst) -- (\tikzinputsegmentlast);
      },
    },
  },
  mid arrow/.style={postaction={decorate,decoration={
        markings,
        mark=at position .5 with {\arrow[#1]{stealth}}
      }}},
  sline/.style={draw, line width=1pt, postaction={on each segment={mid arrow=black}}},
  sregion/.style={fill, opacity=0.2},
  st/.style={fill=Fuchsia},
  su/.style={fill=YellowOrange},
  tu/.style={fill=ForestGreen},
  clabel/.style={fill=none, red}, 
  str/.style={<->}
}




\pagestyle{headings}

\begin{document}

\def \dist {0.3cm}
\def \xdist {0.4cm}
\def \ydist {0.5cm}
\def \circSize {2.5pt} 
\def \armLen {0.4cm} 
\def \edgeShift {0.5mm} 
\def \wingLen {0.3cm} 
\def \sxdist {0.4cm}

\title{On the Induction of p-Cells}
\author{Lars Thorge Jensen, Leonardo Patimo}
\date{} 

\maketitle

\begin{abstract}  
  We study cells with respect to the $p$-canonical basis of the Hecke algebra of 
  a crystallographic Coxeter system (see \cite{JW, JeABC}) and their compatibility
  with standard parabolic subgroups. We show that after induction to the surrounding
  bigger Coxeter group the cell module of a right $p$-cell in a standard parabolic 
  subgroup decomposes as a direct sum of cell modules. Along the way, we state 
  some new positivity properties of the $p$-canonical basis.
\end{abstract}

\tableofcontents

\vspace{1cm}

\section{Introduction}

Let $(W,S)$ be a Coxeter system and let $\heck[(W,S)]$ denote its Hecke algebra.
In \cite{KL}, Kazhdan and Lusztig introduced a preorder on the elements of $W$ 
whose equivalence classes are known as right Kazhdan-Lusztig cells. These Kazhdan-Lusztig 
cells can be used to construct representations of $\heck[(W,S)]$, called cell modules.
Since their introduction, cells have been extensively studied and there is a very rich 
theory describing them.

If the Coxeter group is crystallographic, we can replace the Kazhdan-Lusztig basis of 
$\heck[(W,S)]$ with the $p$-canonical basis (see \cite{JW}) which can be obtained
as characters of the indecomposable parity complexes on a suitable Kac-Moody flag
variety with coefficients in a field of characteristic $p$. For $p=0$, the
$p$-canonical basis specializes to the Kazhdan-Lusztig basis by results of
H\"arterich \cite{HKMFlag}.

Working with the $p$-canonical basis,
we obtain a positive characteristic analogue of the Kazhdan-Lusztig cells, 
called $p$-cells. The motivation for studying $p$-cells is that, other than 
providing a first approximation of the multiplicative structure of the 
$p$-canonical basis, they allow one to construct representations of the 
Hecke algebra (called cell modules) that naturally come with a canonical basis. 
Moreover, according to a recent conjecture by Achar, Hardesty and Riche 
\cite{AHRCellConjs} there is a deep relation between $p$-cells in affine Weyl
groups and the geometry of nilpotent orbits.

The theory of $p$-cells has been initially developed by the first author in 
\cite{JeABC}, where several properties of Kazhdan-Lusztig cells are generalized to 
$p$-cells. The following theorem is inspired by \cite[Proposition 3.11]{BVPrimIdealsExcepGrps} and \cite[Proposition 1]{RIndResCells} and follows 
immediately from a key result in \cite[Theorem 3.9]{JeABC}.

\begin{thm*}[Parabolic restriction of right $p$-cells]
	Let $I \subseteq S$ be a finitary subset and $W_I \subseteq W$ the corresponding
   standard parabolic subgroup. Denote by $W^I$ the set of minimal length 
   representatives of cosets in $W/W_I$. 
   Let $D \subseteq W$ be a right $p$-cell of $W$ and $\heck[D]$ the corresponding
   right cell module of $\heck[(W, S)]$. Then $D$ is a disjoint union of sets of the
   form $xC$, where $x \in W^I$ and $C$ is a right $p$-cell for $W_I$.
   The direct sum of the associated right $p$-cells modules for $\heck[(W_I, I)]$ is
   isomorphic to the restricted module $\heck[D] \rvert_{\heck[(W_I, I)]}$.   
\end{thm*}

In this paper we deal with the generalization of another compatibility result with 
respect to parabolic subgroups. This property is  known as parabolic induction of 
cells. The following is the main result of the present paper.
\begin{thm*}[Parabolic induction for right $p$-cells]
   Let $I \subseteq S$ be a finitary subset and $W_I \subseteq W$ the corresponding
   standard parabolic subgroup. Denote by $\pre{I}{W}$ the set of minimal
   length representatives of cosets in $W_I \backslash W$. Let $C$ be a 
   right $p$-cell of $W_I$ and $\heck[C]$ the corresponding
   right cell module of $\heck[(W_I, I)]$.
   Then $C\cdot \pre{I}{W}$ is a union of right $p$-cells for $W$.
   The right module of $\heck[(W, S)]$ associated to $C\cdot \pre{I}{W}$
   is isomorphic to the induced module $\heck[C] \otimes_{\heck[(W_I, I)]} \heck[(W, S)]$.
\end{thm*}

Parabolic induction is well-known for Kazhdan-Lusztig cells. It was first proven by
Barbasch and Vogan in \cite[Proposition 3.15]{BVPrimIdealsExcepGrps} (see also
\cite[(5.26.1)]{LuRedGrps}) for a finite Weyl group $W$ in the setting of 
primitive ideals for complex semi-simple Lie algebras.
For arbitrary Coxeter groups, it was first conjectured by Roichman in 
\cite[\S5]{RIndResCells}. Finally, Geck proved it in the general 
setting of cells for Hecke algebras with unequal parameters in \cite{GeIndKLCells}.

In the following we will use the notation of the theorem.
One may introduce the $I$-hybrid basis, which combines the $p$-canonical basis 
of $\heck[(W_I, I)]$ and the standard basis elements corresponding to ${}^I W$ 
to get a $\Z[v, v^{-1}]$-basis of $\heck[(W, S)]$. Geck's proof is completely 
elementary and algebraic, combinatorial in nature. It is based on the study of 
the $I$-hybrid basis of the Hecke algebra (for $p = 0$) and its relation with 
the Kazhdan-Lusztig basis.

In contrast, our proof is based on a geometric argument.
We use the categorification of the $p$-canonical basis via indecomposable
parity complexes on the flag variety of a Kac-Moody group.
First, we prove that the base change coefficients between the $p$-canonical and
the $I$-hybrid basis are Laurent polynomials with non-negative coefficients. 
To achieve this, we relate the base change between these two bases to 
Braden's hyperbolic localization functors \cite{BrHypLoc}.
This works because parity sheaves are well behaved with respect to the 
hyperbolic localization functors (see \cite{JMW2}).
We remark that our argument is an adaptation of the work of Grojnowski and Haiman \cite{GHPosLLT}, which deals 
with mixed Hodge modules in characteristic $0$, to the setting of parity sheaves.

The rest of the proof follows along the lines of Geck's proof. It uses a hybrid
order which combines the right $p$-cell preoder in $W_I$ with the Bruhat order
on ${}^I W$. The crucial ingredient here is \cref{propPCanInIhybrid} which shows
that the base change coefficients between the $p$-canonical basis
and the $I$-hybrid basis are unitriangular with respect to the hybrid order.

\subsection{Structure of the Paper}

\begin{description}
   \item[\Cref{secBack}] We recall the definition of the Hecke algebra of a 
      crystallographic Coxeter system. Then we discuss its geometric categorification 
      via parity sheaves.  We also recall the definition of and the main result about
      Braden's hyperbolic localization functors. Finally, we introduce the 
      $p$-canonical basis and $p$-cells and mention some of their elementary properties.
   \item[\Cref{secInd}] We introduce a new basis, called $I$-hybrid basis, 
      for the Hecke algebra, combining the $p$-canonical basis for a parabolic subgroup 
      and the standard basis. Then we study positivity properties of the $p$-canonical
      basis with respect to the $I$-hybrid basis. Finally, we prove that induction for
      $p$-cells still works.
\end{description}

\subsection{Acknowledgements}

We would like to thank Jens Niklas Eberhardt, Simon Riche and Geordie Williamson 
for useful discussions. The first author has received funding from the European 
Research Council (ERC) under the European Union's Horizon 2020 research and 
innovation programme (grant agreement No 677147).

\section{Background}
\label{secBack}

\subsection{Crystallographic Coxeter Systems and their Hecke Algebras}
\label{secHeck}

Let $S$ be a finite set and $(m_{s, t})_{s, t \in S}$ be a matrix with entries in
$\N \cup \{\infty\}$ such that $m_{s,s} = 1$ and $ m_{s, t} = m_{t, s} \geqslant 2$ 
for all $s \neq t \in S$. Denote by $W$ the group generated by $S$ subject to the relations 
$(st)^{m_{s, t}} = 1$ for $s, t \in S$ with $m_{s, t} < \infty$. We say that $(W, S)$
is a \emph{Coxeter system} and $W$ is a \emph{Coxeter group}. The Coxeter group $W$
comes equipped with a length function $\ell: W \rightarrow \N$ and the Bruhat order
$\leqslant$ (see \cite{Hum} for more details). A Coxeter system $(W, S)$ is 
called \emph{crystallographic} if $m_{s, t} \in \{2, 3, 4, 6, \infty\}$ for 
all $s \neq t \in S$. We denote the identity of $W$ by $\Wid$. 

From now on, fix a generalized Cartan matrix $A = (a_{i,j})_{i, j \in J}$ (see 
\cite[\S1.1.1]{KuKMGrps}). Let $(J, \charlat, \{ \rt[i] : i \in J\}, 
\{ \cort[i] : i \in J \})$ be an associated Kac-Moody root datum (see 
\cite[\S1.2]{TiGrpKM} for the definition). Then $X$ is a finitely generated 
free abelian group, and for $i \in J$ we have elements $\rt[i]\in X$ and 
$\cort[i]\in\cocharlat = \Hom_{\Z}(\charlat, \Z)$ respectively satisfying                                                                           
$a_{i,j} = \cort[i](\rt[j])$ for all $i, j \in J$. 

We require our Kac-Moody root datum to satisfy the following assumptions
(see \cite[\S 7.3.1]{MarIntroKMGrps}):
\begin{enumerate}
   \item free, i.e. the set $\{ \rt[i] : i \in J\}$ is linearly independent over $\Z$ 
         in $\charlat$,
   \item cofree, i.e. the set $\{ \cort[i] : i \in J \}$ is linearly independent over $\Z$
         in $\cocharlat$, 
   \item cotorsion free, i.e. $\cocharlat / \sum_{i \in J} \Z \cort[i]$ is torsion-free,
   \item $\charlat$ is of rank $2\lvert J \rvert - \rk(A)$.
\end{enumerate}

\begin{remark}
   These assumptions were originally imposed by Mathieu and Kumar (see 
   \cite[Example 7.10]{MarIntroKMGrps} for the relation between a realization of $A$
   and a Kac-Moody root datum associated to $A$ satisfying the assumptions (i) - (iv)).
   Even though they are not necessary for the construction of Kac-Moody groups 
   (see \cite[Remarque 3.5 and \S3.19]{RoKMGrps} and \cite[\S8.7]{MarIntroKMGrps}),
   we will need the first assumption in our proof.
\end{remark}

To $A$ we associate a crystallographic Coxeter system $(W, S)$ as follows:
choose a set of simple reflections $S$ of cardinality $\lvert J \rvert$ and
fix a bijection $S \overset{\sim}{\rightarrow} J$, $s \mapsto i_s$. For $s \ne t \in S$
we define $m_{s, t}$ to be $2$, $3$, $4$, $6$, or $\infty$ if $a_{i_s, i_t} a_{i_t, i_s}$
is $0$, $1$, $2$, $3$, or $\geqslant 4$ respectively.
The group $W$ is called the \emph{Weyl group} of $A$.

The Hecke algebra $\heck = \heck[(W, S)]$ associated to $(W, S)$ is the 
free $\Z[v, v^{-1}]$-algebra with $\{ \std{w} \; \vert \; w \in W \}$ as basis, called the 
\emph{standard basis}, and multiplication determined by:
\begin{alignat*}{2}
   \std{s}^2 &= (v^{-1} - v) \std{s} + 1  \qquad && \text{for all } s \in S \text{,} \\
   \std{x} \std{y} &= \std{xy} && \text{if } \ell(x) + \ell(y) = \ell(xy) \text{.}
\end{alignat*}

There is a unique $\Z$-linear ring involution $\widebar{(-)}$ on $\heck$ satisfying
$\widebar{v} = v^{-1}$ and $\widebar{\std{x}} = \std{x^{-1}}^{-1}$. The Kazhdan-Lusztig 
basis element $\kl{x}$ is the unique element in $\std{x} + \sum_{y < x} v\Z[v] H_y$ 
that is invariant under $\widebar{(-)}$. This is Soergel's normalization from 
\cite{S2} of a basis introduced originally in \cite{KL}. 


Let $\iota$ be the $\Z[v, v^{-1}]$-linear anti-involution on $\heck$ satisfying 
$\iota(\std{s}) = \std{s}$ for $s \in S$ and thus $\iota(\std{x}) = \std{x^{-1}}$
for $x \in W$.

\subsection{Parity Complexes on Kac-Moody Flag Varieties}
\label{secParity}

In this section we want to recall some results about Kac-Moody flag varieties
and parity complexes on them. The standard references for Kac-Moody flag 
varieties are \cite{MarIntroKMGrps} and \cite{KuKMGrps}.


To our Kac-Moody root datum with Cartan matrix $A$ we can associate a (maximal) Kac-Moody group $G$
with a canonical torus  $T \subset G$.
Mathieu actually constructs group ind-schemes over $\Z$ in \cite{MaKMGrpsI} 
and \cite{MaKMGrpsII}, but for our purposes it is enough to consider their 
complex points as in \cite[\S6.1]{KuKMGrps} (see \cite[Exercise 8.123]{MarIntroKMGrps}
and \cite[\S3.20]{RoKMGrps} for comparisons of the two constructions).

Let $W$ be the Weyl group of $G$, i.e. the Weyl group of the Cartan matrix $A$.
Let $\rts$ denote the set of roots of $G$ and $\rts_+$ the set of positive roots. 
We denote by $\rts^{\text{re}}$ the set of real roots, i.e. roots that can be written 
as $w\cdot \alpha_i$, where $w\in W$ and $\alpha_i$ is a simple root.

Let  $U^+$ be the (pro-)unipotent group obtained as the image of the positive part 
of the associated Kac-Moody Lie algebra under the exponential map defined in
\cite[\S 6.1.1]{KuKMGrps} (also denoted $\mathfrak{U}^{ma+}$ in 
\cite[3.1]{RoKMGrps}). For every positive real root $\rt \in \rts^{\text{re}}_+$ there 
exists a one parameter subgroup $U_{\rt} \subset U^+$ isomorphic to the additive 
group $\mathbb{G}_a$ (see \cite[Example 6.1.5]{KuKMGrps}). 
For every $\rt$ we fix an isomorphism $u_{\rt}: \mathbb{G}_a 
\overset{\sim}{\longrightarrow} U_{\rt}$. Such an isomorphism is unique up to
multiplication by a unit in $\C$ (acting on $\mathbb{G}_a$). The subgroup $U^+$ 
is normalized by the torus $T$. Moreover, every $U_\alpha$ for $\alpha$ a positive 
real root is normalized by $T$ and we have $t u_\alpha(x) t^{-1} = u_{\alpha}(\alpha(t)x)$
for $t \in T$ and $x \in \C$.

The Borel subgroup $B$ is a subgroup of $G$ isomorphic to the semidirect 
product $B=T\ltimes U^+$. 
The set $G/B$ may be  equipped with the structure of an ind-projective ind-variety and it is called 
\emph{Kac-Moody flag variety} (see \cite[\S7.1]{KuKMGrps}).

For any finitary subset $I \subseteq S$ we have the corresponding standard parabolic 
subgroup $P_I$ containing $B$ (see \cite[Definitions 6.1.13 (4) and 6.1.18]{KuKMGrps}).
It admits a Levi decomposition $P_I = L_I \ltimes U_I$, where $L_I$ is a connected 
(finite dimensional) reductive group associated to $A_I$, the sub-Cartan matrix  of $A$ obtained 
by taking the rows and the columns indexed by $I$. We denote by $\rts_I$ the roots of $L_I$.
The group $T$ is a connected algebraic torus, whereas $G$, $B$, $U^+$, $P_I$ and $U_I$ are 
all pro-algebraic groups. The following two examples cover the most important cases:

\begin{ex}
   If $A$ is a Cartan matrix, then 
   the Kac-Moody root datum is equivalent to a root datum in the ordinary
   sense, and $G$ is the associated complex simply connected algebraic group, 
   $B$ a Borel subgroup, $T \subset B$ a maximal torus and $U^+$ the unipotent
   radical of $B$. In this case, $P_I$ is a standard parabolic subgroup and $L_I$ 
   the corresponding Levi.
\end{ex}

\begin{ex}
   Assume that $A= (a_{i,j})$ is a Cartan matrix of size $l-1$ and that the Kac-Moody
   root datum is simply connected (see \cite[Example 7.11]{MarIntroKMGrps}).   
   Let $G$ be the corresponding semi-simple simply connected algebraic group. 
   Moreover, one can add an $l$-th row and column to obtain a generalized 
   Cartan matrix $\widetilde{A}$ as follows
   \begin{align*}
      a_{j,l} &= -\theta(\cort[j]),\\
      a_{l,j} &= -\rt[j](\theta^{\vee}) \quad \text{for } 1 \leqslant j < l,\\
      a_{l,l} &= 2
   \end{align*}
   where the $\alpha_i$ are the simple roots of $G$ and $\theta$ the highest root.
   In this case, the Kac-Moody group $\widetilde{G}$ associated to $\widetilde{A}$
   is of so-called untwisted affine type (see \cite[\S 13.2]{KuKMGrps}) and some
   Kac-Moody flag varieties associated to $\widetilde{G}$ admit an 
   alternative description as partial affine flag varieties. Denote by 
   $\cal{K} = \C\llrrparen{t}$ the field of complex Laurent series
   and by $\cal{O} = \C\llrrbracket{t}$ the ring of complex power series. The Iwahori
   subgroup $I$ is defined as the inverse image of a Borel subgroup $B \subset G(\C)$ under
   the evaluation map $G(\cal{O}) \rightarrow G(\C)$, $t \mapsto 0$.
   Then the affine flag variety $G(\cal{K}) / I$ and the affine Grassmannian
   $G(\cal{K}) / G(\cal{O})$ are isomorphic to the Kac-Moody flag variety
   $\widetilde{G} / \widetilde{P}_I$ for $I= \emptyset$ and 
   $I = \{1, \dots, l-1\}$ respectively.
\end{ex}


We will mostly be interested in the orbits of $P_I$ on $\Fl[G] = G / B$. 
Fix a  finitary subset $I \subseteq S$ and the standard parabolic subgroup 
$W_I = \langle I \rangle \subseteq W$ generated by $I$.  The group $P_I$ can also 
be realized as $P=BW_IB$. Each left $W_I$-coset contains a unique element of minimal length. 
We denote by $\pre{I}{W}$ the set of these minimal coset representatives. 
The action of $P_I$ on $G/B$ induces the Bruhat decomposition
\[ G/B= \bigcup_{w \in {}^I W} {}^I Y_w\]
where ${}^IY_w:= P_I w B / B$. 
In the following, we will usually denote ${}^{\emptyset} Y_x$ by $Y_x$.
If $I = \emptyset$ each ${}^\emptyset Y_w$ is isomorphic to an affine space of dimension
$\ell(w)$ (see \cite[7.4.16 Proposition]{KuKMGrps}).
For $w \in {}^I W$ the decomposition of ${}^I Y_w$ into $B$-orbits gives 
a cell decomposition
\[ {}^I Y_w = \bigcup_{x \in W_I w} {}^{\emptyset} Y_x = 
              \bigcup_{x \in W_I w} \C^{l(x)} \text{.}\]

For $w\in W$ let $\rts(w)=\{\alpha\in\rts_+ \mid w^{-1}(\alpha)\in \rts^-\}$. 
Let $U_{\rts(w)}$ be the subgroup generated by $U_\alpha$, for $\alpha\in \rts(w)$.
Applying \cite[Lemma 6.1.3]{KuKMGrps} gives the following isomorphism (of varieties):
\[ U_{\rts(w)} \cong \prod_{\alpha\in \rts(w)} U_\alpha \text{.}\]
The multiplication induces an isomorphism (see \cite[Exercise 6.2.E (1)]{KuKMGrps} and \cite[Proposition 7.1.15]{KuKMGrps}) :
\begin{align}
   \label{KuBruhatReal}
   U_{\rts(w)} &\longrightarrow Y_w \text{,}\\
         u &\longmapsto uwB \text{.} \nonumber 
\end{align}

In this paper, we will view $\Fl[G] \defeq G / B$ as ind-variety equipped with
the algebraic statification coming from the $B$-orbits. Fix a field
$k$ of characteristic $p$. We will consider $D^b_{B}(\Fl[G])$, the
$B$-equivariant bounded constructible derived category of sheaves 
of $k$-vector spaces on $\Fl[G]$.

Recall the notion of a parity complex introduced by Juteau, Mautner and Williamson in  \cite[\S2.2]{JMW1}.
We will denote by $\parity[B](\Fl[G])$ the full subcategory of
$D^b_{B}(\Fl[G])$ whose objects are parity complexes.

\begin{thm}
   For each $w \in  W$ there exists up to isomorphism a unique
   indecomposable parity complex $\cal{E}(w) \in D^b_{B}(G/B)$ with support
   $\overline{Y_w}$ and restriction $\cal{E}(w)\vert_{Y_w} =
   \underline{k}_{Y_w} [\dim Y_w]$.
   Every indecomposable parity complex in $D^b_{B}(G/B)$ is up to shift isomorphic 
   to $\cal{E}(w)$ for some $w \in W$.
\end{thm}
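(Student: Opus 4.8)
\emph{Proof proposal.} This statement is the Juteau--Mautner--Williamson classification of indecomposable parity complexes \cite{JMW1}, specialised to the stratification of $\Fl[G]=G/B$ by the $B$-orbits $Y_x$, $x\in W$; the plan is to run their argument, checking only that the Kac--Moody (ind-variety, equivariant) setting poses no new difficulty. The one structural input is that each stratum $Y_x$ is an affine space (recalled in this section): it is therefore simply connected and $B$-equivariantly cohomologically even, so that between shifts of constant sheaves on strata no odd $\Hom$'s occur. From this one obtains the \emph{parity-vanishing lemma}: if $\mathcal E$ is $\ast$-even and $\mathcal F$ is $!$-even then $\Hom(\mathcal E,\mathcal F[n])=0$ for $n$ odd, proved by induction on the number of strata in the support using the attaching triangles for an open stratum and its closed complement. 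The lemma in turn makes $\parity[B](\Fl[G])$ a Krull--Schmidt category in which the endomorphism ring of an indecomposable object is local, its maximal ideal consisting of the morphisms that fail to restrict to an isomorphism over the open stratum of the support.

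\emph{Existence.} Fix $w\in W$ and a reduced expression $\expr{w}=(s_1,\dots,s_n)$ with $n=\ell(w)$. Let $\BS(\expr{w})=P_{s_1}\times_B\cdots\times_B P_{s_n}/B$ be the Bott--Samelson variety and $f_{\expr{w}}\colon\BS(\expr{w})\to\overline{Y_w}\subseteq\Fl[G]$ the multiplication map; everything here takes place inside the finite-dimensional projective variety $\overline{Y_w}$, so the ind-structure of $\Fl[G]$ plays no role. As $\BS(\expr{w})$ is an iterated $\mathbb P^1$-bundle it has an affine paving refining the $f_{\expr{w}}$-preimage of the Bruhat stratification, so the proper pushforward $\mathcal E_{\expr{w}}\defeq(f_{\expr{w}})_*\,\underline k_{\BS(\expr{w})}[n]$ is an even, hence parity, complex. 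Since $\expr{w}$ is reduced, $f_{\expr{w}}$ is an isomorphism over $Y_w$, so $\mathcal E_{\expr{w}}\vert_{Y_w}=\underline k_{Y_w}[n]=\underline k_{Y_w}[\dim Y_w]$. Decomposing $\mathcal E_{\expr{w}}$ into indecomposables by Krull--Schmidt, exactly one summand has support all of $\overline{Y_w}$ (the others do not meet $Y_w$, since $\underline k_{Y_w}[\dim Y_w]$ is indecomposable); define $\mathcal E(w)$ to be this summand, shifted so that $\mathcal E(w)\vert_{Y_w}=\underline k_{Y_w}[\dim Y_w]$.

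\emph{Uniqueness and exhaustion.} Let $\mathcal E,\mathcal F$ be indecomposable parity complexes with support $\overline{Y_w}$ and $\mathcal E\vert_{Y_w}\cong\mathcal F\vert_{Y_w}\cong\underline k_{Y_w}[\dim Y_w]$. Writing $j\colon Y_w\hookrightarrow\overline{Y_w}$ for the open inclusion and applying $\Hom(\mathcal E,-)$ to the attaching triangle of $\mathcal F$ along $j$, the obstruction to surjectivity of the restriction map $\Hom(\mathcal E,\mathcal F)\to\Hom(j^\ast\mathcal E,j^\ast\mathcal F)=k$ is a $\Hom$-group, of degree one, between parity complexes of equal parity supported on $\overline{Y_w}\setminus Y_w$, which vanishes by the parity-vanishing lemma; hence that restriction map is surjective, and likewise with $\mathcal E$ and $\mathcal F$ interchanged. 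Choosing $\phi\colon\mathcal E\to\mathcal F$ and $\psi\colon\mathcal F\to\mathcal E$ lifting $\id$, the composite $\psi\circ\phi\in\End(\mathcal E)$ restricts to $\id$ over $Y_w$, hence lies outside $\rad(\End(\mathcal E))$ and is invertible; so is $\phi\circ\psi$, and therefore $\phi$ is an isomorphism. For the final assertion, let $\mathcal P$ be an arbitrary indecomposable parity complex; its support is closed and $B$-stable, so a union of Schubert varieties, and we pick $w$ with $Y_w$ open in $\supp(\mathcal P)$. Since $Y_w$ is a single $B$-orbit isomorphic to affine space, $\mathcal P\vert_{Y_w}$ is a direct sum of shifts of $\underline k_{Y_w}[\dim Y_w]$; choosing a shift $[m]$ occurring there, the same surjectivity produces morphisms $\mathcal E(w)[m]\to\mathcal P$ and $\mathcal P\to\mathcal E(w)[m]$ whose composite on $\mathcal E(w)[m]$ restricts to $\id$ over $Y_w$, hence is invertible, so $\mathcal E(w)[m]$ is a direct summand of the indecomposable $\mathcal P$ and $\mathcal P\cong\mathcal E(w)[m]$ (which in particular forces $\supp(\mathcal P)=\overline{Y_w}$).

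\emph{Main obstacle.} The genuine content is the parity-vanishing lemma, and with it the Krull--Schmidt property and the locality of endomorphism rings; the rest is bookkeeping. Its proof is the inductive d\'evissage along strata of \cite[\S2.3]{JMW1}, and the only thing to verify for the case at hand is the input isolated above: each $B$-orbit $Y_x$ on $\Fl[G]$ is an affine space, contributing only even and free equivariant cohomology. A secondary, routine point is to confirm that Bott--Samelson pushforwards are parity in the $B$-equivariant Kac--Moody context; since each such object is supported on a finite-dimensional Schubert variety, one argues one Schubert variety at a time and the ind-scheme structure of $\Fl[G]$ never intervenes.
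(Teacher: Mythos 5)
Your proposal is correct and follows exactly the Juteau--Mautner--Williamson argument that the paper cites for this theorem: parity vanishing from the even $B$-equivariant cohomology of the affine strata $Y_x$, Krull--Schmidt, existence via Bott--Samelson pushforward, and uniqueness by lifting the identity across the open stratum of the support. Your observation that $H^{\ast}_B(Y_x)$ is free and concentrated in even degrees is also precisely why, as the paper notes in a footnote, the mild characteristic assumptions of \cite{JMW1} disappear in the $B$-equivariant setting, so the argument you give needs no restriction on $p$.
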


The uniqueness up to isomorphism follows from 
\cite[Proposition 4.3 and Theorem 2.12]{JMW1}. 
The existence of the indecomposable parity complexes $\cal{E}(w)$ is shown in 
\cite[Theorem 4.6]{JMW1}.\footnote{In the original proof in \cite{JMW1} they 
require some mild assumptions on the characteristic of $k$ to prove the
existence of $\cal{E}(w)$. However, no assumption on the characteristic of $k$ 
is necessary in the $B$-equivariant setting (see for example 
\cite[paragraph following Lemma 9.6]{RWTiltPCan}). }
In the proof, the complex $\cal{E}(w)$ is constructed via the push-forward of 
the constant sheaf on a suitably chosen Bott-Samelson resolution.

\subsection{The \texorpdfstring{$p$}{p}-Canonical Basis and \texorpdfstring{$p$}{p}-Cells}

In this section, we recall the definition of the $p$-canonical basis in the
geometric setting. For this we will need to define the character map:

\begin{defn}
   Let $\cal{F}\in D^b_B(\Fl[G])$. We can define an element $\ch(\cal{F})\in \heck$ via 
   \[ \ch(\cal{F}) = \sum_{\substack{i\in \Z\\x\in W}} \dim H^{i}(\cal{F}_x)
        v^{-i-\ell(x)} \std{x} \]
   where $\cal{F}_x$ denotes the stalk of $\cal{F}$ in $xB/B$.
   The map $\ch$ is called the \emph{character map} and restricts to an isomorphism   
   $\ch: [\parity[B](\Fl[G])] \overset{\sim}{\longrightarrow} \heck$ of 
   $\Z[v, v^{-1}]$-algebras where $[\parity[B](\Fl[G])]$ denotes the split 
   Grothendieck group of $\parity[B](\Fl[G])$.
\end{defn}

\begin{defn}
   Define $\pcan{w} = \ch([\cal{E}(w)])$ for all $w \in W$. 
   The set $\{ \pcan{w} \; \vert \; w \in W\}$ is a $\Z[v, v^{-1}]$-basis of the
   Hecke algebra $\heck$, called the \emph{$p$-canonical or 
   $p$-Kazhdan-Lusztig basis}.  
\end{defn}

\begin{remark}
   Using the transposed generalized Cartan matrix as input for the diagrammatic
   category of Soergel bimodules and the definition of the $p$-canonical basis 
   given in \cite[Definition 2.4]{JeABC}, one obtains the same basis of 
   $\heck[(W, S)]$. This follows from the main result in \cite[Part 3]{RWTiltPCan}.
   In other words, the various definitions of the $p$-canonical basis are
   consistent (up to taking Langlands' dual input data).
\end{remark}

We will need the following positivity property of the $p$-canonical basis
which can be found in \cite[Proposition 4.2 and its proof]{JW}:

\begin{prop}
   \label{propPCanProps}
   For all $x \in W$ we have:
   \begin{enumerate}
      \item  $\pcan{x} = \std{x} + \sum_{y < x} \p{h}_{y, x} \std{y}$
            with $\p{h}_{y, x} \in \Z_{\geqslant 0}[v, v^{-1}]$,
      \item $\iota(\pcan{x}) = \pcan{x^{-1}}$ and thus in particular 
            $\pre{p}{h}_{y, x} = \pre{p}{h}_{y^{-1}, x^{-1}}$.
   \end{enumerate}
\end{prop}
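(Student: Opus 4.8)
The plan is to prove \cref{propPCanProps} by transporting the stated properties through the character map from the geometric side, where they reflect standard facts about parity complexes. Recall that $\pcan{x} = \ch([\cal{E}(x)])$, so everything should follow from the structure of $\cal{E}(x) \in D^b_B(\Fl[G])$ together with the normalisation $\cal{E}(x)\vert_{Y_x} = \underline{k}_{Y_x}[\dim Y_x]$ and $\supp \cal{E}(x) = \overline{Y_x}$.

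\textbf{Part (i).} First I would observe that, since $\cal{E}(x)$ is supported on $\overline{Y_x} = \bigcup_{y \leqslant x} Y_y$, the stalk $\cal{E}(x)_y$ vanishes unless $y \leqslant x$; hence $\ch(\cal{E}(x))$ is a $\Z[v,v^{-1}]$-linear combination of $\std{y}$ with $y \leqslant x$. Next, the normalisation condition says $\cal{E}(x)_x = \underline{k}_{Y_x}[\dim Y_x]_x$, so $H^i(\cal{E}(x)_x)$ is $k$ in degree $i = -\dim Y_x = -\ell(x)$ and $0$ otherwise; plugging into the formula for $\ch$ gives coefficient $v^{-(-\ell(x))-\ell(x)}\std{x} = \std{x}$. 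So $\pcan{x} \in \std{x} + \sum_{y<x}\Z[v,v^{-1}]\std{y}$. Finally, the coefficient of $\std{y}$ is $\sum_i \dim H^i(\cal{E}(x)_y)\, v^{-i-\ell(y)}$, which is manifestly a non-negative-integer combination of powers of $v$, so $\p{h}_{y,x} \in \Z_{\geqslant 0}[v,v^{-1}]$, proving (i). (The cited reference \cite{JW} gives this; one may instead just refer to it.)

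\textbf{Part (ii).} For the symmetry I would use the geometric interpretation of the anti-involution $\iota$. On the Hecke algebra $\iota(\std{x}) = \std{x^{-1}}$, and geometrically this corresponds to the inversion map $G/B \to B\backslash G$ (equivalently, pulling back along $g \mapsto g^{-1}$ and swapping left/right), which is an equivalence of the relevant equivariant derived categories sending $Y_x \mapsto Y_{x^{-1}}$ and preserving parity complexes. Under this equivalence the indecomposable parity complex $\cal{E}(x)$, being characterised by its support and its normalised restriction to the open stratum, must be sent to $\cal{E}(x^{-1})$. Applying $\ch$ gives $\iota(\pcan{x}) = \pcan{x^{-1}}$. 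Comparing coefficients of $\std{y^{-1}}$ on both sides of $\iota\big(\std{x} + \sum_{y<x}\p{h}_{y,x}\std{y}\big) = \std{x^{-1}} + \sum_{y<x}\p{h}_{y,x}\std{y^{-1}}$ and using that $y<x \iff y^{-1}<x^{-1}$ yields $\p{h}_{y,x} = \p{h}_{y^{-1},x^{-1}}$.

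\textbf{Main obstacle.} The delicate point is part (ii): making precise the claim that inversion on $G/B$ induces a well-defined auto-equivalence on $B$-constructible (or $B$-equivariant) parity complexes that implements $\iota$ on Grothendieck groups and sends $\cal{E}(x)$ to $\cal{E}(x^{-1})$. In the genuine Kac-Moody setting $G/B$ is only an ind-variety and one must be a little careful about which finite-dimensional approximations and which equivariance structures are used; alternatively one can circumvent the geometry entirely and deduce (ii) algebraically from (i) by noting that $\iota(\pcan{x})$ is again an element of $\std{x^{-1}} + \sum_{y<x^{-1}} \Z_{\geqslant 0}[v,v^{-1}]\std{y}$ which is the character of an indecomposable parity complex supported on $\overline{Y_{x^{-1}}}$ with the right normalisation — hence equals $\pcan{x^{-1}}$ by uniqueness. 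Since the statement is cited verbatim from \cite{JW}, in the write-up I would most likely just quote it; but the proof sketch above is the natural self-contained argument.
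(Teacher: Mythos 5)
The paper does not actually prove this proposition: it simply cites \cite[Proposition 4.2 and its proof]{JW}, so there is no internal argument to compare against. Your sketch is a reasonable reconstruction of what a self-contained proof would look like, and part (i) is correct: support forces vanishing outside $y\leqslant x$, the normalisation gives $\std{x}$ with coefficient $1$, and the coefficients of $\ch$ are manifestly sums of dimensions, hence in $\Z_{\geqslant 0}[v,v^{-1}]$. Your primary geometric argument for (ii) is also essentially the right idea: realise $D^b_B(\Fl[G])$ as $B\times B$-constructible complexes on $G$ (working along the filtration by Schubert varieties $X_w$, which handles the ind-variety issue you flag), and use that inversion $g\mapsto g^{-1}$ swaps $Y_x$ and $Y_{x^{-1}}$, preserves parity, and intertwines $\ch$ with $\iota$ because $\ell(x)=\ell(x^{-1})$. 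In \cite{JW} the corresponding step is carried out on the diagrammatic side via the contravariant horizontal-flip autoequivalence of the Hecke category, which is exactly the categorical analogue of this inversion.

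Your proposed \emph{algebraic shortcut} for (ii), however, has a genuine gap. From (i) you know $\iota(\pcan{x}) = \std{x^{-1}} + \sum_{y<x}\p{h}_{y,x}\std{y^{-1}}$ lies in $\std{x^{-1}} + \sum_{z < x^{-1}}\Z_{\geqslant 0}[v,v^{-1}]\std{z}$, but you then assert without justification that such an element \emph{is} the character of an indecomposable parity complex and invoke uniqueness. That is circular: the uniqueness statement in the paper (from \cite[Proposition 4.3, Theorem 2.12]{JMW1}) characterises $\cal{E}(x^{-1})$ among \emph{parity complexes} with given support and normalised restriction; it says nothing about which Laurent-polynomial expansions arise as characters of parity complexes. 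Indeed $\std{x^{-1}}$ itself has the stated triangular form with non-negative lower terms (namely, none) but is almost never the character of a parity complex. So one cannot ``circumvent the geometry entirely''; some categorical input -- the inversion equivalence or its diagrammatic counterpart -- is genuinely needed to produce an actual parity object computing $\iota(\pcan{x})$.
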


The next result is obtained from \cite[Corollary 3.10]{JeABC} by applying
the anti-involution $\iota$ of $\heck$ and using \cref{propPCanProps} (ii).

\begin{prop}
   \label{corParPKLPols}
   Let $I \subseteq S$ be a finitary subset. Then for $x, y \in W_I$
   and $z \in \pre{I}{W}$ we have:
   \[ \pre{p}{h}_{yz, xz} = \pre{p}{h}_{y,x} \]
\end{prop}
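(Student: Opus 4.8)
The plan is to deduce \cref{corParPKLPols} from \cite[Corollary 3.10]{JeABC} together with \cref{propPCanProps}~(ii), exactly as the statement announces. First I would recall what \cite[Corollary 3.10]{JeABC} gives: for a finitary $I \subseteq S$, for $x, y \in W_I$ and $z \in W^I$ (the minimal-length representatives of cosets in $W / W_I$), one has $\pre{p}{h}_{zy, zx} = \pre{p}{h}_{y, x}$. The point is that multiplying on the appropriate side by a minimal coset representative does not change the $p$-canonical base change coefficients, which reflects geometrically the fact that the relevant Bott--Samelson-type resolution (or the parabolic flag variety picture) factors through a fibration whose fibre is insensitive to the "long" direction. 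Since the elements $yz$ and $xz$ with $z \in \pre{I}{W}$ arising in our statement are reduced products $\ell(y) + \ell(z) = \ell(yz)$ and $\ell(x) + \ell(z) = \ell(xz)$, this is the right-handed mirror image of that statement.

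Next I would pass from the left-handed statement to the right-handed one by applying the anti-involution $\iota$. Concretely: by \cref{propPCanProps}~(ii) we have $\pre{p}{h}_{a, b} = \pre{p}{h}_{a^{-1}, b^{-1}}$ for all $a, b \in W$. Now take $x, y \in W_I$ and $z \in \pre{I}{W}$. Then $x^{-1}, y^{-1} \in W_I$, and $z^{-1}$ is a minimal-length representative of a coset in $W_I \backslash W$, i.e. $z^{-1} \in W^I$ in the notation of \cite[Corollary 3.10]{JeABC} (taking inverses exchanges left cosets $W/W_I$ with right cosets $W_I \backslash W$, and preserves minimality of length since $\ell(w) = \ell(w^{-1})$). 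Applying \cite[Corollary 3.10]{JeABC} to $x^{-1}, y^{-1} \in W_I$ and $z^{-1} \in W^I$ gives
\[
   \pre{p}{h}_{z^{-1} y^{-1},\, z^{-1} x^{-1}} = \pre{p}{h}_{y^{-1},\, x^{-1}}.
\]
Since $z^{-1} y^{-1} = (yz)^{-1}$ and $z^{-1} x^{-1} = (xz)^{-1}$, rewriting both sides via \cref{propPCanProps}~(ii) yields $\pre{p}{h}_{yz, xz} = \pre{p}{h}_{y, x}$, which is the claim.

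I do not expect any serious obstacle here; this is a short formal deduction once the exact form of \cite[Corollary 3.10]{JeABC} is in hand. The only points requiring a little care are bookkeeping ones: checking that $z \in \pre{I}{W}$ is equivalent to $z^{-1}$ being a minimal-length representative for $W/W_I$ (so that \cite[Corollary 3.10]{JeABC} genuinely applies after inverting), and checking that the products appearing are length-additive so that $yz$, $xz$ lie in the range where the coefficients $\pre{p}{h}_{\cdot,\cdot}$ are the intended ones. Both are standard facts about Coxeter groups and parabolic quotients (see \cite{Hum}). For completeness I would also note that $\iota$ is the $\Z[v,v^{-1}]$-linear anti-involution fixing the $\std{s}$, so $\iota(\pcan{w}) = \pcan{w^{-1}}$ immediately transports the base-change matrix $(\pre{p}{h}_{y,x})$ to its "inverse-indexed" counterpart, which is precisely \cref{propPCanProps}~(ii).
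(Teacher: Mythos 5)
Your proof is correct and follows exactly the route the paper indicates: apply the anti-involution $\iota$ (equivalently, the relation $\pre{p}{h}_{a,b} = \pre{p}{h}_{a^{-1},b^{-1}}$ from \cref{propPCanProps}~(ii)) to transport the left-handed statement of \cite[Corollary 3.10]{JeABC} for $z \in W^I$ to the right-handed statement for $z \in \pre{I}{W}$, using that inversion exchanges minimal representatives of $W/W_I$ with those of $W_I \backslash W$. The paper does not spell out the computation but cites precisely these two ingredients, so your argument is the intended one.
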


Next, we recall the definition of $p$-cells from \cite[\S3.1]{JeABC}. This notion
is an obvious generalization of the notion of cells introduced by Kazhdan-Lusztig in \cite{KL}.

\begin{defn}
   For $h \in \heck$ we say that \emph{$\pcan{w}$ appears with non-zero coefficient
   in $h$} if the coefficient of $\pcan{w}$ is non-zero when expressing $h$ in
   the $p$-canonical basis.
   
   Define a preorder $\cle{R}$ (resp. $\cle{L}$) on $W$ as follows:
   $x \cle{R} y$ (resp. $x \cle{L} y$) if and only if $\pcan{x}$ appears with non-zero
   coefficient in $\pcan{y} h$ (resp. $h \pcan{y}$) for some $h \in \heck$.
   Right (resp. left) $p$-cells are the equivalence classes in $W$ with respect to
   $\cle{R}$ (resp. $\cle{L}$).
\end{defn}

For more properties and results about $p$-cells, we refer the reader to \cite{JeABC}.

\subsection{Hyperbolic Localization}

Let $T$ be a complex torus and $X$ a complex variety with an action of $T$.
In this section, we let $k$ be a field and $D(X)$ denote the constructible derived category
of sheaves of $k$-vector spaces on $X$. Moreover, we make the following 
assumption:
\begin{equation}
   \label{assTStableAffineCov}
   X \text{ has a covering by } T\text{-stable affine open subvarieties.}\tag{C}
\end{equation}
Note that this assumption is automatically satisfied if $X$ is normal by
Sumihiro's theorem (see \cite{SumEqComp, KKLV}).

Let $\chi: \mathbb{G}_m \longrightarrow T$ be a cocharacter of $T$. We want
to understand the hyperbolic localization with respect to the $\mathbb{G}_m$
action induced by $\chi$.

Denote by $Z \defeq X^{\chi} \subseteq X$ the variety of $\chi$-fixed points in $X$
and let $Z_1, \dots, Z_m$ be its connected components. For $1 \leqslant i \leqslant m$
we will denote the attracting and repelling varieties of the component $Z_i$ by
\[
   Z_i^+ = \{ x \in X \; \vert \; \lim_{s \rightarrow 0} \chi(s)x \in Z_i \} \qquad\text{ and}\qquad
   Z_i^- = \{ x \in X \; \vert \; \lim_{s \rightarrow \infty} \chi(s)x \in Z_i \}
\]
respectively. Let $Z^+$ (resp. $Z^-$) be the disjoint, disconnected union of the $Z_i^+$
(resp. $Z_i^-$) and define $f^{\pm}: Z \longrightarrow Z^{\pm}$ and 
$g^{\pm}: Z^{\pm} \rightarrow X$ via the component-wise inclusions. The attracting
and repelling maps $p^{\pm}: Z^{\pm} \longrightarrow Z$ are defined via 
$p^+(x) = \lim_{s\rightarrow 0} \chi(s)x$ and $p^+(x) = \lim_{s\rightarrow \infty} 
\chi(s)x$. It follows from \cite[Proposition 4.2]{Hess} that these are algebraic maps.

Braden defines the hyperbolic localization functors 
$(-)^{!\ast}, (-)^{\ast !}: D(X) \longrightarrow D(Z)$ associated to the cocharacter
$\chi$ for $\cal{F} \in D(X)$ as follows:
\begin{align*}
   \cal{F}^{! \ast} \defeq (f^+)^! (g^+)^{\ast} \cal{F}\text{,}\\
   \cal{F}^{\ast !} \defeq (f^-)^{\ast} (g^-)^! \cal{F}\text{.}
\end{align*}

We will use the following result (see \cite[Theorem 1]{BrHypLoc}):
\begin{thm}
   \label{thmHypLoc}
   For any $\cal{F} \in D(X)$ there is a natural morphism $\iota_{\cal{F}}: 
   \cal{F}^{\ast !} \longrightarrow \cal{F}^{! \ast}$. If $\cal{F}$ is weakly equivariant
   (e.g. comes from an object in the equivariant derived category), then
   \begin{enumerate}
      \item there are natural isomorphisms $\cal{F}^{! \ast} \cong (p^+)_! (g^+)^* \cal{F}$
            and $\cal{F}^{\ast !} \cong (p^-)_{\ast} (g^-)^! \cal{F}$ and
      \item $\iota_{\cal{F}}: \cal{F}^{\ast !} \longrightarrow \cal{F}^{! \ast}$ is
            an isomorphism.
   \end{enumerate}
\end{thm}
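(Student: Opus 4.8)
The plan is to follow Braden's proof of the hyperbolic localization theorem \cite{BrHypLoc}. Throughout, fixed points, attracting and repelling sets, and all limits refer to the $\mathbb{G}_m$-action on $X$ determined by $\chi$, and the $T$-equivariance of $\cal{F}$ will enter only through the resulting weak $\mathbb{G}_m$-equivariance. Since all the functors involved are compatible with the decomposition of $Z$ into connected components and with restriction to an open subvariety, assumption~\eqref{assTStableAffineCov} will let me assume throughout that $Z$ is connected and, whenever convenient, that $X$ is affine and admits a $\mathbb{G}_m$-equivariant closed embedding into a $\mathbb{G}_m$-representation. The natural morphism $\iota_{\cal{F}}$ itself requires no equivariance and is purely formal: writing $\ell\colon Z\hookrightarrow X$ for the inclusion one has $g^{+}f^{+}=g^{-}f^{-}=\ell$ and, affine-locally, $Z=Z^{+}\cap Z^{-}$, so the square with corners $Z,Z^{+},Z^{-},X$ is Cartesian, and I would assemble $\iota_{\cal{F}}$ from the adjunction (co)units for $f^{\pm},g^{\pm}$ together with the base-change morphism attached to this square. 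The real content is thus that $\iota_{\cal{F}}$ is an isomorphism for weakly equivariant $\cal{F}$, together with the two descriptions in~(i).

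The key ingredient is a \emph{contraction lemma}: if $\mathbb{G}_m$ acts on a variety $Y$ satisfying~\eqref{assTStableAffineCov} in such a way that $\lim_{s\to 0}\chi(s)y$ exists for every $y\in Y$ — equivalently, the action extends to $\bar a\colon\mathbb{A}^{1}\times Y\to Y$ — and if $q\colon Y\to Y^{\mathbb{G}_m}=:Z$ is the limit map and $j\colon Z\hookrightarrow Y$ the inclusion, then for every weakly equivariant $\cal{G}\in D(Y)$ the canonical maps $j^{*}\cal{G}\to q_{*}\cal{G}$ and $q_{!}\cal{G}\to j^{!}\cal{G}$ are isomorphisms. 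Granting this, part~(i) falls out at once: applied to $Y=Z^{+}$ with $\cal{G}=(g^{+})^{*}\cal{F}$ it gives $\cal{F}^{!\ast}=(f^{+})^{!}(g^{+})^{*}\cal{F}\cong(p^{+})_{!}(g^{+})^{*}\cal{F}$, and applied to $Y=Z^{-}$ (which contracts to $Z$ for $\chi^{-1}$) with $\cal{G}=(g^{-})^{!}\cal{F}$ it gives $\cal{F}^{\ast !}=(f^{-})^{*}(g^{-})^{!}\cal{F}\cong(p^{-})_{*}(g^{-})^{!}\cal{F}$. To prove the lemma I would restrict along the $\mathbb{G}_m$-stable affine cover, reducing to $Y$ affine; weak equivariance then supplies an isomorphism $\bar a^{*}\cal{G}\rvert_{\mathbb{G}_m\times Y}\cong\mathrm{pr}_{Y}^{*}\cal{G}$, and the crux is to propagate this identification across the special fibre $\{0\}\times Y$, over which $\bar a$ restricts to $j\circ q$. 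Pushing $\bar a^{*}\cal{G}$ forward along $\mathrm{pr}_{Y}$ and comparing its $!$- and $*$-direct images — equivalently, passing to unipotent nearby cycles in the $\mathbb{A}^{1}$-coordinate — should identify $q_{*}\cal{G}$ with the stalk $j^{*}\cal{G}$ and $q_{!}\cal{G}$ with the costalk $j^{!}\cal{G}$; the two halves are interchanged by Verdier duality, so only one of them needs to be spelled out.

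It then remains to show~(ii), that the resulting map $\iota_{\cal{F}}\colon(p^{-})_{*}(g^{-})^{!}\cal{F}\to(p^{+})_{!}(g^{+})^{*}\cal{F}$ is an isomorphism, which I would check on stalks at each $z\in Z$. Fixing $z$ and using~\eqref{assTStableAffineCov}, I would replace $X$ by a $\mathbb{G}_m$-stable affine neighbourhood of $z$ and choose a $\mathbb{G}_m$-equivariant closed embedding $i_{X}\colon X\hookrightarrow V$ into a $\mathbb{G}_m$-representation, so that $Z^{+},Z^{-},Z$ become the intersections of $X$ with the non-negative, non-positive and zero weight subspaces $V^{+},V^{-},V^{0}$. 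Since hyperbolic localization is compatible with $\mathbb{G}_m$-equivariant closed pushforward and such pushforward reflects isomorphisms, this reduces the assertion for $\cal{F}$ on $X$ to the same assertion for $(i_{X})_{*}\cal{F}$ on the smooth model $V=V^{+}\times_{V^{0}}V^{-}$; there one compares the two hyperbolic localizations directly by an induction that removes one weight line at a time, the base case being exactly the elementary $\mathbb{A}^{1}$-computation underlying the contraction lemma, and one reads off that $\iota$ is the corresponding composite of isomorphisms.

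I expect the main obstacle to be the contraction lemma, and inside it the step of extending the weak-equivariance isomorphism $\bar a^{*}\cal{G}\cong\mathrm{pr}_{Y}^{*}\cal{G}$ from the generic locus $\mathbb{G}_m\times Y$ across the fibre over $0$: this is precisely where weak equivariance is indispensable — the statement is false for arbitrary $\cal{G}$ — and making it rigorous, whether through the nearby-cycles argument indicated above or through Braden's original manipulation of the six functors on $\mathbb{A}^{1}\times Y$, is the technical heart of the theorem; once it is in place everything else is formal d\'evissage.
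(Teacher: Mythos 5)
First, a remark on what ``the paper's proof'' is here: \cref{thmHypLoc} is stated as background and is not proved in the paper at all --- it is Braden's theorem, quoted with a reference to \cite{BrHypLoc}, and only its statement is used later (in the proof that $i_y^{\ast}(\pi_y)_!j_y^{\ast}$ preserves parity complexes). Your proposal is therefore a reconstruction of Braden's argument, and at the level of strategy it is the same route: your ``contraction lemma'' is exactly Braden's key lemma (pushforward along the contraction agrees with $*$-restriction to the fixed locus, and dually with $!$-restriction, for weakly equivariant complexes); part (i) is its immediate application to $Z^+$ and to $Z^-$ with the inverted action, as you say; and the reduction of (ii) to invariant affine opens with equivariant embeddings into $\mathbb{G}_m$-representations is precisely where assumption \eqref{assTStableAffineCov} (Sumihiro's theorem) enters in \cite{BrHypLoc}. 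One small slip: the canonical arrows in your lemma go the other way ($q_{*}\cal{G}\to j^{*}\cal{G}$ via the unit for $(j^{*},j_{*})$ and $j^{!}\cal{G}\to q_{!}\cal{G}$ via the counit for $(j_{!},j^{!})$, using $q\circ j=\mathrm{id}$); the isomorphism statement is unaffected.

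Two steps would not survive as written. For the construction of $\iota_{\cal{F}}$, the square $(Z,Z^{+},Z^{-},X)$ is Cartesian only locally: globally $Z^{+}\cap Z^{-}$ can be strictly larger than $Z$ (already for $X=\mathbb{P}^{1}$ with the standard action one has $Z^{+}=Z^{-}=\mathbb{P}^{1}$, and such $\mathbb{P}^{1}$'s occur inside Schubert varieties), and the attracting sets of an invariant affine open $U$ need not agree with $Z^{\pm}\cap U$; so ``base change over this square, affine-locally'' does not by itself produce a global natural transformation, and one has to follow Braden's actual construction via exchange morphisms. More seriously, the final d\'evissage for (ii), ``an induction that removes one weight line at a time'' on the linear model $V$, is not a well-defined induction: for a fixed cocharacter, hyperbolic localization on $V$ does not factor as a composite of hyperbolic localizations along individual weight lines, and $(i_{X})_{*}\cal{F}$ is not an external product, so no K\"unneth-type reduction is available; this comparison on the linear model is where the genuine work of (ii) lies (Braden handles it differently, and Drinfeld--Gaitsgory later gave a purely formal proof of the isomorphism via an interpolating family over $\mathbb{A}^{1}$). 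You correctly identify the contraction lemma, and the role of weak equivariance in it, as the technical heart; but for a standalone proof both of these gaps, as well as the proof of the contraction lemma itself, would need to be filled in. None of this affects the paper, which uses the theorem as a black box.
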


Using this result, Braden proves for $k = \Q$ that the hyperbolic localization
of the intersection cohomology complex $\mathbf{IC}(X; \Q)$ is a direct
sum of shifted intersection cohomology complexes.

\section{Induction of \texorpdfstring{$p$}{p}-Cells}
\label{secInd}

\subsection{Positivity Properties}
\label{secPosProps}

Recall that for our fixed finitary subset $I \subseteq S$ we denote the parabolic subgroup
generated by $I$ by $W_I$ and the minimal coset representatives by ${}^I W$.
The multiplication induces a bijection 
\begin{align*}
   W_I \times \pre{I}{W} &\overset{\sim}{\longrightarrow} W\\
    (x, y) &\longmapsto xy
\end{align*}
and we have $\ell(xy) = \ell(x) + \ell(y)$ if $(x, y) \in W_I \times {}^I W$ 
(see \cite[Proposition 2.4.4]{BBCoxGrps}). The following $I$-hybrid basis 
interpolates between the standard basis for $I = \diameter$ and the $p$-canonical
basis for $I = S$.\footnote{Technically, the case $I = S$ is only allowed if 
$W$ is finite.}

\begin{lem}
   The set $\{\mixCan{x}{y} \; \vert \; x \in W_I, y \in \pre{I}{W} \}$ is a 
   $\Z[v,v^{-1}]$-basis of the Hecke algebra $\heck$, called the \emph{$I$-hybrid basis}.
\end{lem}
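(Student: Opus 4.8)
The plan is to realise the $I$-hybrid family $\{\mixCan{x}{y}\} = \{\pcan{x}\std{y}\}$, where $\pcan{x}$ denotes the $p$-canonical basis element of the parabolic subalgebra $\heck[(W_I, I)] \subseteq \heck$ (for $x \in W_I$) and $\std{y}$ the standard basis element (for $y \in \pre{I}{W}$), as the image of the standard basis $\{\std{w} \mid w \in W\}$ of $\heck$ under a unitriangular, hence invertible, change of basis over $\Z[v,v^{-1}]$. I would start from two standard facts. Since $(W_I, I)$ is again a crystallographic Coxeter system, \cref{propPCanProps}(i) applied inside $\heck[(W_I, I)]$ gives, for every $x \in W_I$,
\[
   \pcan{x} = \std{x} + \sum_{\substack{x' \in W_I \\ x' < x}} \p{h}_{x', x}\,\std{x'},
\]
with $\p{h}_{x', x} \in \Z[v,v^{-1}]$; in particular $\pcan{x}$ is supported on $\{x' \in W_I \mid x' \leqslant x\}$ with leading coefficient $1$. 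Moreover, the multiplication map $W_I \times \pre{I}{W} \to W$ is a bijection satisfying $\ell(xy) = \ell(x) + \ell(y)$ (see \cite[Proposition 2.4.4]{BBCoxGrps}), so $\std{x}\std{y} = \std{xy}$ and $\{\std{x}\std{y} \mid x \in W_I,\, y \in \pre{I}{W}\}$ is precisely the standard basis of $\heck$.

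Next I would multiply the expansion of $\pcan{x}$ on the right by $\std{y}$, again using $\ell(x'y) = \ell(x') + \ell(y)$ for $x' \in W_I$ and $y \in \pre{I}{W}$, to obtain
\[
   \mixCan{x}{y} = \std{xy} + \sum_{\substack{x' \in W_I \\ x' < x}} \p{h}_{x', x}\,\std{x'y}.
\]
I would then order the index set $W_I \times \pre{I}{W}$ by declaring $(x', y') \preceq (x, y)$ whenever $y' = y$ and $x' \leqslant x$ in $W_I$; each down-set of $\preceq$ is finite, since Bruhat intervals in $W_I$ are finite. The displayed formula says exactly that the transition matrix from $(\std{xy})$ to $(\mixCan{x}{y})$ is upper unitriangular for $\preceq$, with $1$'s on the diagonal and entries in $\Z[v,v^{-1}]$; such a matrix admits a two-sided inverse over $\Z[v,v^{-1}]$, built recursively over the finite down-sets. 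Hence $\{\mixCan{x}{y} \mid x \in W_I,\, y \in \pre{I}{W}\}$ is a $\Z[v,v^{-1}]$-basis of $\heck$.

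I do not expect a genuine obstacle here: the statement is elementary, and the only points deserving a little care are the length-additivity identities (standard Coxeter combinatorics) and the invertibility of the infinite unitriangular matrix, which the local finiteness of $\preceq$ takes care of. One could also bypass triangularity entirely: $\heck$ is free as a left $\heck[(W_I, I)]$-module with basis $\{\std{y} \mid y \in \pre{I}{W}\}$ --- indeed $\heck[(W_I, I)]\,\std{y} = \bigoplus_{x \in W_I} \Z[v,v^{-1}]\,\std{xy}$, and these submodules partition the standard basis of $\heck$ as $y$ runs over $\pre{I}{W}$ --- so, since $\{\pcan{x} \mid x \in W_I\}$ is a $\Z[v,v^{-1}]$-basis of $\heck[(W_I, I)]$, the claim follows at once. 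I would nonetheless record the explicit unitriangular formula, as it is exactly what the triangularity arguments in the rest of the section will rely on.
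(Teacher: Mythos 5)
Your proof is correct and takes essentially the same route as the paper: both observe that $\mixCan{x}{y}$ expands in the standard basis as $\std{xy}$ plus lower-order terms, and conclude by unitriangularity of the (locally finite) change-of-basis matrix. Your write-up is merely more explicit — spelling out the length-additivity, the precise partial order, and the finiteness of down-sets — and the free-$\heck[(W_I,I)]$-module reformulation you mention is the same calculation repackaged.
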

\begin{proof}
	Notice that $\pcan{x}\std{y}\in \std{xy}+\sum_{z < xy} \N[v, v^{-1}] \std{z}$. 
    Since the base change matrix with the standard basis is unitriangular, the 
    set $\{\mixCan{x}{y} \; \vert \; x \in W_I, y \in \pre{I}{W} \}$ 
    is also a basis.
\end{proof}

We introduce the following notation for the base change coefficients between the
$p$-canonical and the $I$-hybrid bases
\[ \pcan{w} = \sum_{x \in W_I,\; y \in \pre{I}{W}}
      \p{r}^I_{xy,w} \mixCan{x}{y} \]
with $\p{r}^I_{xy,w} \in \Z[v, v^{-1}]$ for $w \in W$, $x \in W_I$ and $y \in \pre{I}{W}$.
The following result shows that these polynomials have in fact non-negative coefficients:

\begin{prop}
   \label{propPos}
   For all $w \in W$, $x \in W_I$ and $y \in \pre{I}{W}$ the polynomial $\p{r}^I_{xy, w}$ lies in 
   $\Z_{\geqslant 0}[v, v^{-1}]$. 
\end{prop}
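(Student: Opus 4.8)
The plan is to realize the base-change coefficients $\p{r}^I_{xy,w}$ as graded stalk dimensions of a hyperbolic localization of the indecomposable parity complex $\cal{E}(w)$, and then invoke the fact (from \cite{JMW2}) that hyperbolic localization preserves parity complexes, so that the resulting object is again a direct sum of shifted parity complexes — whose stalk dimensions are manifestly non-negative. First I would choose a cocharacter $\chi$ of the torus $T$ whose fixed-point set on $\Fl[G] = G/B$ interacts with the Levi $L_I$ in the right way: concretely, $\chi$ should be strictly dominant on the roots in $\rts_I$ (so that $L_I$, or rather its intersection with the relevant part, is "attracting") while being trivial or suitably negative on a complementary set. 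With such a $\chi$, the attracting variety of a fixed-point component should recover the $P_I$-orbit structure, matching the cell decomposition ${}^I Y_w = \bigcup_{x \in W_I w} Y_x$ recalled in \cref{secParity}.

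The key computational step is to identify, for a fixed $z \in \pre{I}{W}$, the relevant connected component $Z_z$ of $\Fl[G]^\chi$ together with its attracting map $p^+ : Z_z^+ \to Z_z$, and to check that the restriction $(g^+)^*\cal{E}(w)$ pushed forward along $p^+_{!}$ computes exactly the $I$-hybrid coefficients. Here I would use \cref{thmHypLoc}: since $\cal{E}(w)$ is $B$-equivariant, hence weakly $T$-equivariant, we get $\cal{E}(w)^{!*} \cong (p^+)_!(g^+)^*\cal{E}(w)$ and the canonical map $\iota_{\cal{E}(w)}$ is an isomorphism. Assumption \eqref{assTStableAffineCov} holds because each ${}^I Y_w$ (or a suitable finite-dimensional approximation $\overline{Y_w}$) is covered by $T$-stable affines coming from the cell structure. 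The point is that $\ch(\cal{E}(w)^{!*})$, expanded in the standard basis of $\heck[(W_I,I)]$-type pieces coming from the fibres over $Z_z$, reproduces $\sum_x \p{r}^I_{xz,w}\, (\text{class of }\pcan{x})$, because the stalk of $\cal{E}(w)$ along a $B$-orbit $Y_{xz}$ and the stalk of its hyperbolic localization along $Y_x \subseteq Z_z$ agree by the attracting-map description, and the $p$-canonical basis element $\pcan{x}$ appears precisely because $\mixCan{x}{z} = \pcan{x}\std{z}$ has $\pcan{x}$ as its "$W_I$-part".

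Once this dictionary is set up, the conclusion is immediate: by \cite[Theorem 1.7]{JMW2} (parity complexes are stable under $(-)^{!*}$, up to shifts, when the strata are affine spaces as they are here), $\cal{E}(w)^{!*}$ is a direct sum of shifted indecomposable parity complexes $\cal{E}_{L_I}(x)[\,\cdot\,]$ on the $L_I$-flag variety; each such summand has non-negative graded stalk dimensions, hence $\ch$ of it lies in $\Z_{\geqslant 0}[v,v^{-1}]$-span of the $p$-canonical basis of $\heck[(W_I,I)]$, and therefore each $\p{r}^I_{xz,w} \in \Z_{\geqslant 0}[v,v^{-1}]$. I expect the main obstacle to be bookkeeping: precisely matching the combinatorics of the cocharacter $\chi$ with the decomposition $W \cong W_I \times \pre{I}{W}$ and verifying that the attracting/repelling maps on $\Fl[G]$ genuinely restrict to the $B$-orbit cell structure on each ${}^I Y_w$ — in particular, confirming that the fixed-point components $Z_i$ are themselves (finite-dimensional) $L_I$-flag varieties and that the shifts introduced by hyperbolic localization are the expected ones (governed by $\dim Z_i^+ - \dim Z_i$, i.e.\ by lengths), so that no sign or shift discrepancy spoils positivity. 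The ind-variety issue — $\Fl[G]$ is infinite-dimensional — is handled as usual by working inside a $B$-stable (equivalently, $P_I$-stable) finite-dimensional closed subvariety containing $\supp \cal{E}(w) = \overline{Y_w}$, on which Braden's theorem applies directly.
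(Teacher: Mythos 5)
Your overall strategy is exactly the one the paper uses: realize the base-change coefficients $\p{r}^I_{xy,w}$ via hyperbolic localization of $\cal{E}(w)$ with respect to a cocharacter of $T$ stabilized precisely by $W_I$, then invoke Braden's theorem together with the parity preservation results of Juteau--Mautner--Williamson and the non-negativity of characters of parity complexes in the $p$-canonical basis. The later parts of your proposal describe the correct geometric picture (fixed components are $L_I$-flag varieties, attracting varieties are $P_I$-orbits, the ind-variety is handled by working in a closed $P_I$-stable finite-dimensional subvariety), which matches the chain \cref{yB}, \cref{mainlemma}, \cref{paritytoparity}.

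However, there is a concrete slip in the description of the cocharacter. You write that $\chi$ ``should be strictly dominant on the roots in $\rts_I$ (so that $L_I$ \ldots is `attracting') while being trivial or suitably negative on a complementary set.'' This is backwards. For the fixed locus of the $\mathbb{G}_m$-action to be $\bigsqcup_{y\in\pre{I}{W}} i_y(\Fl[L])$, you need $\langle\alpha,\chi\rangle = 0$ for $\alpha\in\rts_I$ --- so that each $U_\alpha$ with $\alpha\in\rts_I$ is \emph{fixed}, and hence $L_I$ (not just an intersection) is pointwise centralized --- and $\langle\alpha,\chi\rangle > 0$ for $\alpha\in\rts^+\setminus\rts_I^+$, so that the unipotent radical $U_I$ of $P_I$ (not $L_I$) is the attracting direction. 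This is what the paper means by a dominant cocharacter $\gamma$ with $\Stab_W(\gamma) = W_I$ (and it is where the freeness assumption on the realization is actually needed). With your literal choice, the centralizer of $\chi$ would be the Levi of the \emph{complementary} parabolic, and the fixed locus would have nothing to do with $\Fl[L_I]$. Since the rest of your proposal (attracting varieties being $P_I$-orbits, fixed components being $L_I$-flag varieties) reflects the correct picture, this reads as an internal inconsistency rather than a deep misunderstanding --- but as written the step would fail.

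One secondary point: the paper's proof of parity preservation does not invoke a blanket ``parity is preserved by $(-)^{!*}$'' theorem. Rather, following \cite[\S2.2]{JMWParShTilt}, it verifies that the hyperbolic localization of the push-forward of the constant sheaf along a Bott--Samelson resolution is parity (this requires checking conditions on the resolution), and then deduces parity of $(\cal{E}(w))^{!*}$ because $\cal{E}(w)$ is a summand of such a push-forward. Your citation is in the right family of results but skips this intermediate reduction, which is where the actual work is done.
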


The main idea is to replace the purity arguments in \cite[Theorem 3.2]{GHPosLLT} by
parity arguments. Before explaining the proof, we need to introduce some more notation.

Recall the decomposition $P_I=L_I\ltimes U_I$ from \S \ref{secParity}. To simplify
notation, we will write $L$ for $L_I$ and $P$ for $P_I$ from now on.
Denote by $B_L$ the Borel subgroup of $L$ induced
by our choice of positive roots, so that we have $B_L = B \cap L$.

\begin{lem}\label{yB}
For any $y \in \pre{I}{W}$ the stabilizer $\pre{y}{B}$ of $yB$ satisfies 
$B_L = \pre{y}{B} \cap L$.
\end{lem}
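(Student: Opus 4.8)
The plan is to unwind what it means for an element of $L$ to stabilize the coset $yB$. Since $y \in {}^I W$ is a minimal length representative of a coset in $W_I \backslash W$, I expect the key combinatorial fact to be that $y^{-1}$ sends no positive root of $L$ (i.e.\ no root in $\rts_I$) to a negative root; equivalently $\rts(y^{-1}) \cap \rts_I = \emptyset$, which is the standard characterization of minimal coset representatives. First I would observe that the stabilizer ${}^{y}B$ of $yB$ in $G$ is $yBy^{-1} \cap G = yBy^{-1}$ intersected with whatever ambient group we work in; more precisely, an element $g \in G$ fixes $yB/B$ iff $y^{-1}gy \in B$, so ${}^{y}B = y B y^{-1}$. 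Then ${}^{y}B \cap L = yBy^{-1} \cap L$, and the claim becomes $B_L = yBy^{-1} \cap L$.

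Next I would use the (pro-)unipotent-times-torus decomposition $B = T \ltimes U^+$ and the analogous $B_L = T \ltimes U_L^+$, where $U_L^+$ is generated by the $U_\alpha$ for $\alpha \in \rts_I \cap \rts_+$. Since $T$ is normalized by $W$ (and $y T y^{-1} = T$), the torus part causes no trouble, so the content is entirely about the unipotent parts. Conjugating, $y U^+ y^{-1}$ is the group generated by $U_{y(\alpha)}$ for $\alpha \in \rts_+$, i.e.\ by $U_\beta$ for $\beta$ ranging over $y(\rts_+) \cap \rts^{\mathrm{re}}_+$ together with (possibly) root subgroups attached to roots sent to negative roots — here I need to be a little careful in the Kac-Moody setting, but the relevant point is that $U_\beta \subset L$ forces $\beta \in \rts_I$. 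So I compute $yU^+y^{-1} \cap L$: a product of root subgroups $U_\beta$ lands in $L$ iff all the $\beta$ occurring lie in $\rts_I$; combined with $\beta = y(\alpha)$, $\alpha \in \rts_+$, and the minimality property $y^{-1}(\rts_I \cap \rts_+) \subseteq \rts_+$, this pins down exactly $\beta \in \rts_I \cap \rts_+$, giving $yU^+y^{-1} \cap L = U_L^+$. Reassembling with the torus part yields ${}^{y}B \cap L = T \ltimes U_L^+ = B_L$.

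The reverse inclusion $B_L \subseteq {}^{y}B \cap L$ is the easy half: $B_L \subseteq L$ trivially, and $B_L \subseteq {}^{y}B$ amounts to $y^{-1} B_L y \subseteq B$, i.e.\ $y^{-1}(\rts_I \cap \rts_+) \subseteq \rts_+$, which is again precisely the defining property of $y \in {}^I W$. The main obstacle I anticipate is the bookkeeping with root subgroups in the pro-algebraic Kac-Moody setting — one must make sure that intersecting the pro-unipotent group $yU^+y^{-1}$ with the finite-dimensional reductive group $L$ really is controlled root-subgroup-by-root-subgroup, for which I would invoke \cite[Lemma 6.1.3]{KuKMGrps} (the product decomposition of $U_{\rts(w)}$ used earlier in the excerpt) and the normalization properties of $T$ recalled in \S\ref{secParity}. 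Everything else is the standard Coxeter-combinatorial identity characterizing minimal length left coset representatives.
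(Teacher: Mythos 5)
Your argument is correct and follows essentially the same route as the paper: identify $\pre{y}{B} \cap L$ as the Borel subgroup of $L$ corresponding to the roots $\alpha \in \rts_I$ with $y^{-1}(\alpha) \in \rts^+$, then invoke the minimal-coset-representative property of $y \in \pre{I}{W}$ to see that this set is exactly $\rts_I^+$. (One small notational slip: with the paper's convention $\rts(w) = \{\alpha \in \rts_+ \mid w^{-1}(\alpha) \in \rts^-\}$, the fact you want is $\rts(y) \cap \rts_I = \diameter$, not $\rts(y^{-1}) \cap \rts_I = \diameter$ --- your verbal formulation of the condition is right, only the formula is off.)
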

\begin{proof}

The group ${}^yB\cap L$ is the Borel subgroup of $L$ corresponding to the system of positive roots $\{\alpha \in \Phi_I \mid y^{-1}(\alpha) \in \Phi_I^+\}\subset \Phi_I$.
 
Since $y\in {}^IW$, we have $y^{-1}s > y^{-1}$ for all $s \in I$, so 
$y^{-1}(\rt[s])\in \rts^+$. Hence, for all $\rt \in \rts_I^+$ we have 
$y^{-1}(\rt)\in \rts^+$ and similarly for  $\rt \in \rts_I^-$ we have 
$y^{-1}(\rt)\in \rts^-$.  It follows that 
$\{\alpha \in \rts_I \mid y^{-1}(\alpha) \in \Phi_I^+\}=\rts_I^+$ and
$B_L={}^yB\cap L$.
\end{proof}

Hence, for any $y \in \pre{I}{W}$ we can realize 
$\Fl[L]$ inside $\Fl[G]$ via the isomorphism
\begin{align*}
i_y: \Fl[L] &\overset{\sim}{\longrightarrow} LyB/B \subseteq \Fl[G] \\
xB_L &\longmapsto xyB
\end{align*}


Denote by $X_w = \bigcup_{u \leqslant w} Y_u \subseteq \Fl[G]$ the 
Schubert variety associated to $w \in W$. 

We fix a dominant co-character  $\gamma: \C^{\ast} \longrightarrow T$  whose
stabilizer in $W$ is $W_I$ (this is possible because our realization is free). 
For each $w \in W$ the connected components of the
fixed locus $X_w^{\gamma}$ are precisely the intersections
$X_w \cap i_y(\Fl[L])$ for $y \in \pre{I}{W}$ (note that $\gamma$ acts trivially
by conjugation on a root subgroup $U_{\rt}$ if and only if $\rt$ lies in $\rts_I$).

\begin{lem}
	Let $y\in {}^IW.$ Then $\gamma$ retracts $ByB$ on $yB$, i.e.
	\[\lim_{t\rightarrow 0}\gamma(t) byB=yB\]
	for all $b\in B$.
\end{lem}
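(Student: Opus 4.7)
The plan is to reduce the claim to an analysis of how $\gamma(t)$ acts by conjugation on the unipotent factors of $B$, using the Levi decomposition $P = L \ltimes U_I$. First I would write $b = t_0 u$ with $t_0 \in T$ and $u \in U^+$. Since $\gamma(t) \in T$ commutes with $t_0$, and since $y^{-1} t_0 y \in T \subseteq B$ forces $t_0 y B = y B$, the torus factor can be absorbed, reducing the problem to showing $\lim_{t \to 0} \gamma(t) u y B = y B$ for $u \in U^+$.

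The inclusion $B \subseteq P$ gives $U^+ \subseteq P$, and $U_I$ (the pro-unipotent radical of $P$) is normal in $U^+$. Setting $U_L^+ \defeq U^+ \cap L$, which is generated by the root subgroups $U_\rt$ for $\rt \in \rts_I^+$, one obtains a decomposition $U^+ = U_L^+ \cdot U_I$, so one may write $u = u_L u_I$ with $u_L \in U_L^+$ and $u_I \in U_I$. The dominance of $\gamma$ together with the assumption that its stabilizer in $W$ equals $W_I$ gives $\langle \gamma, \rt \rangle = 0$ for $\rt \in \rts_I$ and $\langle \gamma, \rt \rangle > 0$ for $\rt \in \rts^+ \setminus \rts_I$. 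Combined with the identity $\gamma(t) u_\rt(x) \gamma(t)^{-1} = u_\rt(t^{\langle \gamma, \rt\rangle} x)$, this shows that $U_L^+$ is centralized by $\gamma(t)$ while $\gamma(t) u_I \gamma(t)^{-1} \to 1$ as $t \to 0$. Since also $\gamma(t) y B = y B$ (because $y^{-1} \gamma(t) y \in T \subseteq B$), it follows that
\[
   \gamma(t) u y B \;=\; u_L \bigl( \gamma(t) u_I \gamma(t)^{-1} \bigr) y B \xrightarrow[t \to 0]{} u_L y B .
\]

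To conclude, I would invoke the key fact already established in the proof of \cref{yB}: for $y \in \pre{I}{W}$ and $\rt \in \rts_I^+$ one has $y^{-1}(\rt) \in \rts^+$. Hence $y^{-1} U_L^+ y \subseteq U^+ \subseteq B$, which yields $u_L y B = y B$. The step I expect to require the most care is justifying the decomposition $U^+ = U_L^+ \cdot U_I$ and the convergence of the conjugation limits inside the ind-variety $\Fl[G]$; both are standard consequences of the structure theory of the (pro-unipotent) Kac-Moody groups $U^+$ and $U_I$, but in the Kac-Moody setting one has to be mindful that $U^+$ is genuinely infinite-dimensional.
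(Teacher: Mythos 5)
Your argument is correct, but it takes a genuinely different route from the paper. You decompose $b$ via $B = T \ltimes U^+$ and then $U^+ = (U^+\cap L)\ltimes U_I$, centralize the Levi part, contract the pro-unipotent radical, and finally push $u_L$ through $y$ into $B$ using $y^{-1}(\rts_I^+)\subseteq \rts^+$. The paper instead works entirely inside the finite-dimensional cell: by \cref{KuBruhatReal} every point of $ByB/B$ is $u_{\beta_1}(x_1)\cdots u_{\beta_k}(x_k)yB$ with $\beta_i \in \rts(y)$, and since $y \in \pre{I}{W}$ forces $\rts(y) \subseteq \rts^+\setminus\rts_I^+$, conjugation by $\gamma(t)$ simply rescales the finitely many coordinates $x_i$ by $t^{\langle \beta_i,\gamma\rangle}$ with strictly positive exponents. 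Both proofs rest on the same two inputs (the consequence of $y\in\pre{I}{W}$ established in \cref{yB}, and the sign of the pairings with the dominant cocharacter $\gamma$), but the paper's coordinatization sidesteps exactly the points you flag as delicate: one never needs the decomposition $U^+ = U_L^+\cdot U_I$, nor the limit $\gamma(t)u_I\gamma(t)^{-1}\to 1$ in the infinite-dimensional pro-group $U_I$ --- note that $U_I$ also involves imaginary root spaces, so the scaling identity for real root subgroups does not literally cover it, and one would have to argue via the weight grading of its pro-Lie algebra or via the fact that its action on the finite-dimensional Schubert variety containing $uyB$ factors through a finite-dimensional quotient. Your approach buys a more structural picture (it is essentially the restriction to $ByB/B$ of the retraction $\pi_y$ of $PyB/B$ onto $LyB/B$ treated in the next lemma of the paper), at the cost of these extra pro-group justifications; the paper's version is the more economical one for the statement at hand.
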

\begin{proof}
	Let $\rts(y)=\{\beta_1, \beta_2, \ldots, \beta_k\}$.
	As in \cref{yB}, we have $y^{-1}(\alpha)\in \rts^+$ for any $\alpha\in \rts_I^+$, hence $\rts(y)\subset \rts^+\setminus \rts^+_I$. In particular, we have $\langle \beta,\gamma\rangle >0$ for all $\beta \in \rts(y)$.

	By \cref{KuBruhatReal} we can write 
   $byB=u_{\beta_1}(x_1)u_{\beta_2}(x_2)\ldots u_{\beta_k}(x_k)yB$. Then 
	\[\gamma(t)u_{\beta_1}(x_1)u_{\beta_2}(x_2)\ldots u_{\beta_k}(x_k)yB = 
      u_{\beta_1}(t^{\langle \beta_1, \gamma \rangle}x_1)u_{\beta_2}(
         t^{\langle \beta_2, \gamma \rangle} x_2)\ldots 
      u_{\beta_k}(t^{\langle \beta_k, \gamma \rangle} x_k)yB\]
	and $\lim_{t\rightarrow 0} \gamma(t)byB=yB$.
\end{proof}

\begin{lem}
The attracting map $\pi_y: PyB/B \longrightarrow LyB/B$ is induced by the group
homomorphism $P\rightarrow L$. The attracting variety to $i_y(\Fl[L])$ is 
$PyB/B = \bigcup_{x \in W_I} Y_{xy}$. Moreover, $\pi_y$ is an affine bundle, with 
fiber isomorphic to $\C^{\ell(y)}$.

\end{lem}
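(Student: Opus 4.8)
The plan is to establish the three assertions in turn, using the semidirect decomposition $P = L\ltimes U_I$, the Bruhat decomposition, and the behaviour of $\gamma$ on root subgroups recorded above. First I would identify $PyB/B$ combinatorially. Since $P = BW_IB$ and the multiplication $W_I\times\pre{I}{W}\to W$ is a length-additive bijection, one gets $PyB = \bigsqcup_{x\in W_I}BxyB$ and hence $PyB/B = \bigsqcup_{x\in W_I}Y_{xy}$; summing over $y\in\pre{I}{W}$ this yields the partition $\Fl[G] = \bigsqcup_{y\in\pre{I}{W}}PyB/B$, which will be used below.

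Next, to identify the attracting variety and compute $\pi_y$ at the same time, I would write an arbitrary point of $PyB/B$ as $luyB$ with $l\in L$ and $u\in U_I$. Because the stabiliser of $\gamma$ in $W$ is $W_I$, we have $\langle\alpha,\gamma\rangle = 0$ for $\alpha\in\rts_I$ and $\langle\alpha,\gamma\rangle > 0$ for every positive root $\alpha\notin\rts_I$; consequently $\gamma(t)$ is central in $L$, it fixes $yB$ (since $y^{-1}\gamma(t)y\in T\subseteq B$), and conjugation by $\gamma(t)$ contracts $U_I$ onto its identity as $t\to 0$. Hence
\[
   \gamma(t)\cdot luyB = l\bigl(\gamma(t)u\gamma(t)^{-1}\bigr)yB \xrightarrow{t\to 0} lyB\in LyB/B.
\]
This shows simultaneously that $PyB/B$ is contained in the attracting variety of the fixed component $i_y(\Fl[L]) = LyB/B$, and that the attracting map is $\pi_y(luyB) = lyB$ — that is, $\pi_y$ is induced by the projection $P = L\ltimes U_I\twoheadrightarrow L$. (Equivalently, by \cref{yB} the homomorphism $P\to L$ carries $P\cap\pre{y}{B}$ onto $B_L = L\cap\pre{y}{B}$, so it descends to $PyB/B = P/(P\cap\pre{y}{B})\to L/B_L = LyB/B$.) Now the attracting varieties of the distinct connected fixed components $LyB/B$, $y\in\pre{I}{W}$, are pairwise disjoint; since each lies in $\Fl[G]$ while, by the first paragraph, $\Fl[G] = \bigsqcup_y PyB/B$ with $PyB/B$ contained in the $y$-th attracting variety, all these inclusions must be equalities. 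Therefore the attracting variety of $i_y(\Fl[L])$ is exactly $PyB/B = \bigsqcup_{x\in W_I}Y_{xy}$.

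For the bundle statement I would argue that $\pi_y$ is $L$-equivariant, hence a fibre bundle, and that its fibre over $yB$ is $\pi_y^{-1}(yB) = U_IyB/B\cong U_I/(U_I\cap\pre{y}{B})$. Since $U_\alpha\subseteq\pre{y}{B}$ exactly when $y^{-1}\alpha\in\rts_+$, i.e. exactly when $\alpha\notin\rts(y)$, and since $\rts(y)\subseteq\rts_+\setminus\rts_I$ as in the preceding lemma, this quotient is $\prod_{\alpha\in\rts(y)}U_\alpha\cong\C^{\ell(y)}$. Identifying $LyB/B\cong L/B_L$, one then recognises $\pi_y$ as the associated bundle $L\times^{B_L}\bigl(U_I/(U_I\cap\pre{y}{B})\bigr)$, on which $B_L$ acts through conjugation, hence by affine transformations of the fibre; so $\pi_y$ is a Zariski-locally trivial affine bundle with fibre $\C^{\ell(y)}$.

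I expect the main obstacle to be the careful bookkeeping required to make the third step (and the root-subgroup manipulations in the second) rigorous in the pro-algebraic setting: one must check that $U_I\cap\pre{y}{B}$ is a closed subgroup of the pro-unipotent group $U_I$ with quotient the finite-dimensional affine space $\prod_{\alpha\in\rts(y)}U_\alpha$, that $B_L$ acts affinely on this quotient, and that the associated-bundle presentation is Zariski-locally trivial there. One should also record that the connected components of $X_w^\gamma$ are indexed by $\pre{I}{W}$ exactly via $X_w\cap i_y(\Fl[L])$, which underlies the disjointness used in the second step.
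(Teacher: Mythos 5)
Your proof is correct and follows essentially the same route as the paper: write a point of $PyB/B$ as $luyB$ with $l\in L$, $u\in U_I$, use that $\gamma$ centralizes $L$ and contracts $U_I$ (equivalently, retracts $ByB$ onto $yB$, which is what the preceding lemma records) to get $\pi_y(luyB)=lyB$, and then use $L$-equivariance and the identification of the fiber over $yB$ with $\mathbb C^{\ell(y)}$ to get the associated-bundle description $L\times^{B_L}(\text{fiber})\to L/B_L$. The two small places where you go beyond the paper's write-up are welcome rather than different in kind: you make explicit why the attracting variety is \emph{exactly} $PyB/B$ (pairwise disjointness of the attracting sets together with the partition $\Fl[G]=\bigsqcup_{y}PyB/B$), which the paper leaves implicit; and you describe the fiber as $U_I/(U_I\cap\pre{y}{B})\cong\prod_{\alpha\in\rts(y)}U_\alpha$ rather than as the Bruhat cell $ByB/B$ used in the paper — these agree, since $B=B_LU_I$ and $B_L\subseteq\pre{y}{B}$ forces $B_LyB=yB$.
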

\begin{proof}
	By the previous lemma we have $ByB\subset \pi_y^{-1}(yB)$. Since $W_I$ stabilizes 
   $\gamma$, the Levi subgroup $L$ commutes with the image of $\gamma$. We can write 
   any element $p\in P$ as $p=lu$, with $l\in L$ and $u\in U\subset B$. Now we have
	\[\pi_y(pyB)=\lim_{t\rightarrow 0} \gamma(t)pyB=\lim_{t\rightarrow 0} l \gamma(t)uyB =lyB.\]
	The following diagram of $L$-equivariant maps is commutative
	\[\begin{tikzcd}
      L \times_{B_L} ByB/B \arrow{r}{\sim} \arrow{d}{\pr_1} & 
         PyB/B \arrow{d}{\pi_y} \\
      \Fl[L] \arrow{r}{i_y}[swap]{\sim} & LyB/B
	\end{tikzcd}
	\]
	and the horizontal arrows are bijections since $\pi_y$ is an $L$-equivariant fiber bundle 
   and \[\pr_1^{-1}(yB/B) \cong \pi_y^{-1}(yB/B)\cong ByB\cong \C^{\ell(y)}.\qedhere\]
\end{proof}
Denote by $j_y: PyB/B \longrightarrow \Fl[G]$ the inclusion. The following result gives 
a geometric interpretation of the coefficients occuring when expressing the class of 
an object in $D^b_B(\Fl[G])$ in terms of the $I$-hybrid basis. 

\begin{lem}\label{mainlemma}
   For $x \in W_I$ and $y \in \pre{I}{W}$ we have:
   \begin{enumerate}
      \item For $\cal{F}\in D^b_{B_L}(\Fl[L])$ we have 
            \[\ch\left((j_{y})_! \pi_{y}^{\ast} (i_{y})_{\ast} \cal{F}\right)=
               v^{-\ell(y)}\ch(\cal{F})H_y.\]
            In this equation $\ch(\cal{F}) \in \heck[(W_I, I)]$ is viewed as an element
            in $\heck$ by identifying $\heck[(W_I, I)]$ as the subalgebra of $\heck$
            generated by $\std{s}$ for $s \in I$.
      \item For $\mathcal{F} \in D^b_B(\Fl[G])$ the coefficient of $\pcan{x} \std{y}$
            in $\ch(\mathcal{F})$ when expressed in the $I$-hybrid basis is $v^{\ell(y)}$
            times the coefficient of $\pcan{x}$ in $\ch\left(i_y^{\ast} (\pi_y)_! 
            j_y^{\ast} \mathcal{F}\right)$ when expressed in the $p$-canonical basis.
   \end{enumerate}
\end{lem}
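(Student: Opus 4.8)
The plan is to compute both characters via the stalks at the $T$-fixed points $wB/B$, since that is all the character map $\ch$ records. For part (i), the relevant geometric input is that $i_y$ sends $xB_L\in\Fl[L]$ to $xyB\in LyB/B$, that $LyB/B$ is a section of the attracting map $\pi_y$ (hence $\pi_y$ restricts to the identity on it), and that $(j_y)_!$ is extension by zero from $PyB/B=\bigcup_{x\in W_I}Y_{xy}$. Since the only $T$-fixed point of the Bruhat cell $Y_{xy}$ is $xyB$, the complex $(j_y)_!\pi_y^\ast(i_y)_\ast\cal{F}$ has stalk $\cal{F}_{xB_L}$ at $xyB$ for each $x\in W_I$ and vanishing stalk at every $wB/B$ with $w\notin W_I y$. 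Inserting this into the definition of $\ch$ and using $\ell(xy)=\ell(x)+\ell(y)$ together with $\std{xy}=\std{x}\std{y}$ rewrites $\sum_{x\in W_I}\sum_i\dim H^i(\cal{F}_{xB_L})\,v^{-i-\ell(xy)}\std{xy}$ as $v^{-\ell(y)}\ch(\cal{F})\std{y}$, which is (i).

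For part (ii) I would first run the analogous stalk computation for $\Phi_y\defeq i_y^\ast(\pi_y)_!j_y^\ast$. By base change, $(\Phi_y\cal{F})_{xB_L}=\big((\pi_y)_!j_y^\ast\cal{F}\big)_{xyB}=R\Gamma_c\big(\pi_y^{-1}(xyB),\,\cal{F}\vert_{\pi_y^{-1}(xyB)}\big)$. Here $\pi_y^{-1}(xyB)=\dot x\cdot ByB/B\cong\C^{\ell(y)}$ by $L$-equivariance of $\pi_y$, and it is contained in the single Bruhat cell $Y_{xy}$ because $\dot x\, ByB\subseteq BxB\cdot ByB=BxyB$ (lengths add). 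Since $\cal{F}$ is $B$-equivariant and $Y_{xy}\cong\C^{\ell(xy)}$ is contractible, $\cal{F}\vert_{Y_{xy}}$ is a direct sum of shifts of the constant sheaf with stalk $\cal{F}_{xyB}$, so $R\Gamma_c$ along the fibre $\C^{\ell(y)}$ produces only a clean shift and $(\Phi_y\cal{F})_{xB_L}\cong\cal{F}_{xyB}[-2\ell(y)]$. Feeding this into $\ch$ yields $\ch(\Phi_y\cal{F})=v^{-2\ell(y)}\sum_{x\in W_I}\big(\sum_i\dim H^i(\cal{F}_{xyB})\,v^{-i-\ell(x)}\big)\std{x}$.

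The second step matches this against the $I$-hybrid expansion. For $x\in W_I$ and $y\in\pre{I}{W}$, \cref{propPCanProps}(i) together with the subword property gives $\pcan{x}=\sum_{u\in W_I}\p{h}_{u,x}\std{u}$, hence $\pcan{x}\std{y}=\sum_{u\in W_I}\p{h}_{u,x}\std{uy}$ lies in $\bigoplus_{u\in W_I}\Z[v,v^{-1}]\std{uy}$, and for fixed $y$ the matrix $(\p{h}_{u,x})_{u,x\in W_I}$ is unitriangular. Therefore the change of basis between the $I$-hybrid basis and the standard basis is block-diagonal with blocks indexed by $\pre{I}{W}$, so the $I$-hybrid coefficients of $\ch(\cal{F})$ attached to a fixed $y$ depend only on the part of $\ch(\cal{F})$ lying in $\bigoplus_{u\in W_I}\Z[v,v^{-1}]\std{uy}$; by the stalk formula for $\ch$ that part equals $v^{-\ell(y)}\big(\sum_{u\in W_I}\sum_i\dim H^i(\cal{F}_{uyB})v^{-i-\ell(u)}\std{u}\big)\std{y}=v^{\ell(y)}\ch(\Phi_y\cal{F})\std{y}$. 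Expanding $\ch(\Phi_y\cal{F})$ in the $p$-canonical basis of $\heck[(W_I,I)]$, right-multiplying by the invertible element $\std{y}$, and comparing with the $I$-hybrid expansion of $\ch(\cal{F})$ then shows that the coefficient of $\pcan{x}\std{y}$ in $\ch(\cal{F})$ equals $v^{\ell(y)}$ times the coefficient of $\pcan{x}$ in $\ch(\Phi_y\cal{F})$.

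The step I expect to be the main obstacle is the first part of (ii): making sure that $(\pi_y)_!j_y^\ast\cal{F}$, restricted over a fixed point $xyB$, is genuinely the shift $\cal{F}_{xyB}[-2\ell(y)]$, i.e. that pushing $\cal{F}$ forward along the contracting affine bundle $\pi_y$ introduces no extra cohomology. This hinges on $\pi_y^{-1}(xyB)$ lying in a single Bruhat cell and on that cell being equivariantly contractible, which kills all potential extensions; everything else is bookkeeping with lengths and with the unitriangularity of the $p$-canonical base change. Conceptually one could instead observe that $\Phi_y$ is left adjoint to the functor of part (i) and that the composite $\Phi_y\circ\big((j_y)_!\pi_y^\ast(i_y)_\ast(-)\big)$ is the shift $[-2\ell(y)]$, which explains why (ii) is essentially a restatement of (i), but the direct stalk computation above is the shortest way to write it down.
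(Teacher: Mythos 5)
Your argument is correct and follows essentially the same route as the paper: both proofs reduce everything to stalks at $T$-fixed points, and the crux in (ii) is identically the fact that $j_y^\ast\cal{F}$ is constant along the contractible fibres of $\pi_y$, so that $(\pi_y)_!$ produces only a shift by $-2\ell(y)$. The only presentational difference is that you extract this stalk-by-stalk via $\pi_y^{-1}(xyB)\subseteq Y_{xy}$ and then invoke block-diagonality of the $I$-hybrid-to-standard change of basis, whereas the paper packages the same computation as the functor isomorphism $(j_y)_!\pi_y^\ast(i_y)_\ast i_y^\ast(\pi_y)_! j_y^\ast\cong (j_y)_! j_y^\ast[-2\ell(y)]$ and then sums $\ch\big((j_y)_!j_y^\ast\cal{F}\big)$ over the disjoint decomposition $\Fl[G]=\bigsqcup_{y}PyB/B$, applying part (i) to each summand.
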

\begin{proof}
	\begin{enumerate}
		\item We have $\left((j_{y})_! \pi_{y}^{\ast} (i_{y})_{\ast} \cal{F} \right)_z \neq 0$
            only if $z \in W_Iy$. For $x \in W_I$ we have 
            \[ \left( (j_{y})_! \pi_{y}^{\ast} (i_{y})_{\ast} \cal{F} \right)_{xy} = 
               \left( (i_y)_{*} \cal{F} \right)_{xy} = \cal{F}_{x}\]
		since $i_y$ is an isomorphism and $i_y^{-1}(xyB)=x B_L$. Hence, it follows that
      \begin{align*}
         \ch \left( (j_{y})_! \pi_{y}^{\ast} (i_{y})_{\ast} \cal{F} \right) &=
            \sum_{\substack{i \in \Z\\x \in W_I}} 
               v^{-i -\ell(x) -\ell(y)} \dim H^{i}(\cal{F}_{x}) \std{x}\std{y} \\
            &= v^{-\ell(y)} \ch(\cal{F}) \std{y}\text{.}
      \end{align*}
	\item The map $\pi_y$ is a topological fibration with fibers isomorphic to 
         $\C^{\ell(y)}$.
         Since $\cal{F}$ is constant along the fibers of $\pi_y$ we have 
         \[(j_{y})_! \pi_{y}^{\ast} (i_{y})_{\ast}  i_y^{\ast} (\pi_y)_! j_y^{\ast} \cal{F}
            \cong (j_{y})_! \pi_{y}^{\ast} (\pi_y)_! j_y^{\ast} \cal{F}
            \cong (j_{y})_! j_y^{\ast} \cal{F} [-2\ell(y)] \text{.} \]
         
         The flag variety $\Fl[G]$ is the disjoint union of $PyB/B$, for $y\in {}^I W$.
         It follows that
         \begin{align*}
               \ch(\cal{F}) &=\sum_{y\in {}^IW}\ch \left((j_{y})_! j_y^{\ast} \cal{F} \right) \\
                  &= \sum_{y\in {}^IW} v^{2\ell(y)} \ch \left({(j_{y})_! \pi_{y}^{\ast} 
                        (i_{y})_{\ast} i_y^{\ast} (\pi_y)_! j_y^{\ast} \cal{F}}\right)\\
                  &= \sum_{y\in {}^IW} v^{\ell(y)} \ch\left({ i_y^{\ast} (\pi_y)_! 
                        j_y^{\ast} \cal{F}}\right) H_y
         \end{align*}
         where we used the first part for the last equality.
         We conclude by expressing $\ch\left({i_y^{\ast} (\pi_y)_! j_y^{\ast} 
         \cal{F}} \right)$ in the $p$-canonical basis of $\heck[(W_I, I)].$
\end{enumerate}
\end{proof}

The following result crucially relies on Braden's hyperbolic localization:

\begin{prop}\label{paritytoparity}
   For any $y \in \pre{I}{W}$ the functor $i_y^{\ast}(\pi_y)_!j_y^{\ast}$ preserves
   parity complexes and thus restricts to a functor 
   $\parity[B](\Fl[G]) \longrightarrow \parity[B_L](\Fl[L])$.
\end{prop}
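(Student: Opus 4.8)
The plan is to recognise the functor $i_y^{\ast}(\pi_y)_!j_y^{\ast}$ as the restriction to the $y$-th connected component of the $\gamma$-fixed locus of Braden's hyperbolic localization functor, and then to invoke the fact, recorded in \cite{JMW2}, that hyperbolic localization sends parity complexes to parity complexes. The actual work lies in matching the geometry by means of the lemmas above, in passing from the ind-variety $\Fl[G]$ to an honest Schubert variety so that \cref{thmHypLoc} applies, and in keeping track of the equivariant structure and of the pariversity.

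First I would reduce to finite dimensions. An object $\cal{F}\in\parity[B](\Fl[G])$ is supported on finitely many $B$-orbits, hence on a Schubert variety $X_w=\overline{Y_w}$; this is a normal projective variety (normality of Kac-Moody Schubert varieties, see \cite{KuKMGrps}) which is $T$-stable, so it satisfies assumption~(C) by Sumihiro's theorem. Since $\cal{F}$ is supported on $X_w$, each of $j_y^{\ast}\cal{F}$, $(\pi_y)_!j_y^{\ast}\cal{F}$ and $i_y^{\ast}(\pi_y)_!j_y^{\ast}\cal{F}$ may be computed on $X_w$, so \cref{thmHypLoc} applies with $X=X_w$ and $\chi=\gamma$.

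Next I would carry out the identification. By the lemmas above, for $y\in\pre{I}{W}$ the subvariety $i_y(\Fl[L])\cap X_w$ is the connected component of $X_w^{\gamma}$ through $yB$, its attracting variety is $PyB/B\cap X_w=\bigcup_{x\in W_I}(Y_{xy}\cap X_w)$, and the attracting map is $\pi_y$; moreover, as $\gamma$ is dominant, every cell $Y_u$ with $u\le w$ is entirely attracting. Thus, in the notation of \cref{thmHypLoc}, the object $j_y^{\ast}\cal{F}$ is the restriction of $(g^+)^{\ast}\cal{F}$ to the $y$-th component of $Z^+$, and $(\pi_y)_!$ is the corresponding component of $(p^+)_!$. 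Since $\cal{F}$ is $B$-equivariant, hence weakly equivariant, \cref{thmHypLoc} (i) gives a natural isomorphism between $(\pi_y)_!j_y^{\ast}\cal{F}$ and the restriction of $\cal{F}^{!\ast}$ to $i_y(\Fl[L])\cap X_w$; pulling back along the isomorphism $i_y$ gives $i_y^{\ast}(\pi_y)_!j_y^{\ast}\cal{F}\cong i_y^{\ast}(\cal{F}^{!\ast})$, a direct summand of the hyperbolic localization of $\cal{F}$. For the equivariance of the target: $B_L=B\cap L$ acts on $PyB/B$ and on $LyB/B$ through $B_L\subset P$, and the maps $j_y$, $\pi_y$, $i_y$ are all $B_L$-equivariant (for $\pi_y$ this is part of the lemma above; for $i_y$ it is immediate from $xB_L\mapsto xyB$ together with \cref{yB}), so $i_y^{\ast}(\pi_y)_!j_y^{\ast}$ carries $D^b_B(\Fl[G])$ into $D^b_{B_L}(\Fl[L])$.

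It remains to see that $\cal{F}^{!\ast}$ is parity on $\Fl[L]$ with respect to the $B_L$-orbits, and for this I would apply the hyperbolic localization statement of \cite{JMW2}. One checks that the $B$-orbit stratification of $X_w$ with its natural pariversity $Y_u\mapsto\ell(u)\bmod 2$ satisfies the hypotheses of that statement: every stratum $Y_u$ is $\gamma$-stable (as $\gamma$ has values in $T\subset B$) and, by a short computation with the parametrisation \cref{KuBruhatReal}, is entirely attracting with fixed locus lying in the single component of $X_w^{\gamma}$ indexed by the $\pre{I}{W}$-part $y$ of $u$, the attracting dimension being the constant $\ell(y)$ (in accord with $\pi_y$ being an affine bundle of rank $\ell(y)$). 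Under $i_y$ the cell $B_LxB_L/B_L$ of $\Fl[L]$, of dimension $\ell(x)$, corresponds to the fixed locus of $Y_{xy}$, of dimension $\ell(xy)-\ell(y)=\ell(x)$, so the natural pariversities of $\Fl[G]$ and of $\Fl[L]$ agree up to the uniform shift $\ell(y)$, which only interchanges ``even'' and ``odd'' and so preserves being parity. Hence \cite{JMW2} gives that $\cal{F}^{!\ast}$, and therefore its summand $i_y^{\ast}(\pi_y)_!j_y^{\ast}\cal{F}$, is a parity complex. I expect this last paragraph to be the main obstacle: pinning down the exact form of the hyperbolic localization theorem about parity complexes one wants to cite, and checking that its hypotheses are met by the $B$-orbit stratification with its natural pariversity; the rest is bookkeeping with the lemmas and \cref{thmHypLoc} already in hand.
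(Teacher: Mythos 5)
Your setup agrees with the paper's: reduce to finite unions of $B$-orbits so that \cref{thmHypLoc} applies, identify $i_y^{\ast}(\pi_y)_!j_y^{\ast}$ as a component of Braden's hyperbolic localization $(-)^{!\ast}$, and appeal to a result from \cite{JMW2} for the fact that parity is preserved. The gap is in the final paragraph, and you have correctly flagged it yourself: the result you ``expect'' from \cite{JMW2} --- a general theorem that $(-)^{!\ast}$ preserves parity complexes for a stratification with $\gamma$-stable strata, uniformly attracting cells and compatible pariversity --- is not what is actually proved there. Checking that the strata are $\gamma$-stable, that each $Y_u$ is entirely attracting with constant attracting dimension $\ell(y)$, and that the pariversities match, is all correct bookkeeping, but none of it by itself gives the needed vanishing on stalks and costalks of $\cal{F}^{!\ast}$; some extra input is required.

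The paper's actual input is a resolution argument. What \cite{JMW2} provides is a framework (their \S 2.2, conditions (1)--(3)) ensuring that the hyperbolic localization of $\pi_!\underline{k}$ is a parity complex when $\pi$ is a (suitably nice) resolution, together with the verification that Bott--Samelson resolutions of Kac--Moody Schubert varieties satisfy those conditions. One then uses that every indecomposable parity complex $\cal{E}(w)$ arises as a direct summand of $\pi_!\underline{k}$ for an appropriate Bott--Samelson resolution, so that $\cal{E}(w)^{!\ast}$ is a summand of a parity complex, hence parity; restricting to the connected component through $yB$ then finishes the proof. To make your argument complete you should replace your last paragraph's imagined ``stratification criterion'' with this Bott--Samelson detour, which is exactly the extra input that turns your bookkeeping into a proof.
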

\begin{proof}
   \cref{thmHypLoc} implies that $\bigoplus_{y \in \pre{I}{W}} (\pi_y)_! j_y^{\ast}$ is 
   isomorphic to Braden's hyperbolic localization functor 
   $(-)^{!\ast}: D^b_{B}(\Fl[G]) \longrightarrow D^b_{B_L}(\Fl[G]^{\gamma})$.
   
   Juteau, Mautner and Williamson develop in \cite[\S2.2]{JMWParShTilt} a general
   framework to show that the hyperbolic localization of the pushforward of the
   constant sheaf on a resolution of singularities satisfying
   several properties (see \cite[\S2.2 (1) - (3)]{JMWParShTilt}) is a parity complex.
   In \cite[Proof of Theorem 1.6]{JMWParShTilt} they verify that these conditions
   are satisfied for Bott-Samelson resolutions of Schubert varieties of a 
   Kac-Moody group. Since any indecomposable parity complex $\cal{E}_w$ occurs as a 
   direct summand of the push-forward of the constant sheaf on a suitably chosen 
   Bott-Samelson resolution, it follows that $(\cal{E}_w)^{!\ast}$ is a parity complex.
   Finally, a complex $\cal{F}$ on $\Fl[G]^{\gamma}$ is parity if and only if
   its restriction to any connected component is a parity complex.
\end{proof}

\begin{proof}[Proof of \cref{propPos}.]
	By \cref{mainlemma}(ii), the polynomial ${}^pr_{xy,w}^I$ is equal to $v^{\ell(y)}$ the coefficient of $\pcan{x}$ in $\ch\left(i_y^*(\pi_y)_!j_y^*\cal{E}_w\right)$. By  \cref{paritytoparity}, the complex 
	 $i_y^*(\pi_y)_!j_y^*\cal{E}_w$ is a parity complex on $\Fl[L]$, hence its character has positive coefficients when written in the $p$-canonical basis of $\heck[(W_I, I)]$.
\end{proof}

\begin{remark}
   For $z \in W$, $x \in W_I$ and $y \in {}^I W$ write
   \[ \mixCan{x}{y} \cdot \pcan{z} = \sum_{u \in W_I,\; w \in \pre{I}{W}}
      \p{d}^{I, uw}_{xy, z} \;\mixCan{u}{w} \]
   with $\p{d}^{I, uw}_{xy,z} \in \Z[v, v^{-1}]$. Grojnowski and Haiman show in
   \cite[Corollary 3.9]{GHPosLLT} that the Laurent polynomials $\p{d}^{I, uw}_{xy,z}$
   have non-negative coefficients for $p=0$. Recently Williamson \cite{WReflSubgrps} obtained, after specializing $v$ to $1$, a more general results  which holds for a larger class of reflection subgroups of $W$.
\end{remark}

\subsection{Main Result}
\label{secMainResult}

The proof of the main result draws inspiration from \cite{GeIndKLCells}.
Throughout, we are working with the right $p$-cell preorder instead of the left
Kazhdan-Lusztig cell preorder. The analogous version for left $p$-cells
can be obtained by applying the anti-involution $\iota$. The following 
result is the analogue of \cite[Lemma 2.2]{GeIndKLCells}. Its proof 
works in our setting after replacing all Kazhdan-Lusztig related 
notions by the corresponding $p$-canonical analogues.
We will rewrite the proof here for the sake of completeness.

\begin{lem}
   \label{lemIdeal}
   Let $\mathcal{J} \subseteq W_I$ be a subset such that 
   \[ \{ u \in W_I \; \vert \; u \cle{R} w \text{ for some } w \in \mathcal{J} \} 
   \subseteq \mathcal{J}\text{.}\]
   Let $\mathcal{M} = \langle \pcan{x}\std{y} \; \vert \; x \in \mathcal{J}, 
         y \in \pre{I}{W} \rangle_{\Z[v, v^{-1}]} \subseteq \heck$. 
   Then $\mathcal{M}$ is a right ideal in $\heck$.
\end{lem}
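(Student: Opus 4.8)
The plan is to check directly that $\mathcal{M}$ is stable under right multiplication by all of $\heck$. Since $\mathcal{M}$ is spanned over $\Z[v,v^{-1}]$ by part of the $I$-hybrid basis, it is in particular a $\Z[v,v^{-1}]$-submodule of $\heck$, so it suffices to verify $\pcan{x}\std{y}\std{s}\in\mathcal{M}$ for every $x\in\mathcal{J}$, $y\in\pre{I}{W}$ and $s\in S$, because the $\std{s}$ generate $\heck$ as a $\Z[v,v^{-1}]$-algebra. To organise the computation I would invoke the standard trichotomy for a pair $(y,s)$ with $y\in\pre{I}{W}$ and $s\in S$ (Deodhar's lemma; see e.g. \cite{GeIndKLCells} and \cite{BBCoxGrps}): either (a) $ys\in\pre{I}{W}$ with $\ell(ys)=\ell(y)+1$, or (b) $ys\in\pre{I}{W}$ with $\ell(ys)=\ell(y)-1$, or (c) $ys\notin\pre{I}{W}$, in which case there is a unique $r\in I$ with $ys=ry$ and $\ell(ys)=\ell(y)+1$.

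In case (a) one has $\std{y}\std{s}=\std{ys}$, so $\pcan{x}\std{y}\std{s}=\pcan{x}\std{ys}=\mixCan{x}{ys}\in\mathcal{M}$. In case (b) one has $\std{y}=\std{ys}\std{s}$, hence $\std{y}\std{s}=\std{ys}\std{s}^{2}=(v^{-1}-v)\std{y}+\std{ys}$, and therefore $\pcan{x}\std{y}\std{s}=(v^{-1}-v)\mixCan{x}{y}+\mixCan{x}{ys}$, which lies in $\mathcal{M}$ since $ys\in\pre{I}{W}$ and $\mathcal{M}$ is a $\Z[v,v^{-1}]$-submodule. So cases (a) and (b) are immediate from the defining relations $\std{s}^{2}=(v^{-1}-v)\std{s}+1$ and $\std{x}\std{y}=\std{xy}$ when $\ell(xy)=\ell(x)+\ell(y)$.

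The substance of the lemma is case (c). There $\std{y}\std{s}=\std{ys}$ (length goes up) and $\std{ys}=\std{ry}=\std{r}\std{y}$ (using $\ell(ry)=\ell(r)+\ell(y)$), so $\pcan{x}\std{y}\std{s}=\pcan{x}\std{r}\std{y}$. Now I would use that for $x\in W_I$ the element $\pcan{x}$ lies in the parabolic subalgebra $\heck[(W_I,I)]$ and coincides there with the $p$-canonical basis element of $\heck[(W_I,I)]$ (the parabolic compatibility of the $p$-canonical basis; cf. \cref{corParPKLPols} and \cite{JeABC}). Since also $\std{r}\in\heck[(W_I,I)]$, the product $\pcan{x}\std{r}$ lies in $\heck[(W_I,I)]$, and expanding it in the $p$-canonical basis of $\heck[(W_I,I)]$ we may write $\pcan{x}\std{r}=\sum_{u\in W_I}c_u\pcan{u}$ with $c_u\in\Z[v,v^{-1}]$. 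By the very definition of the right $p$-cell preorder of $W_I$ (applied to $\std{r}\in\heck[(W_I,I)]$) we get $c_u\neq0\Rightarrow u\cle{R}x$, and hence $c_u\neq0\Rightarrow u\in\mathcal{J}$ by the hypothesis on $\mathcal{J}$. Therefore $\pcan{x}\std{y}\std{s}=\sum_{u\in\mathcal{J}}c_u\,\pcan{u}\std{y}=\sum_{u\in\mathcal{J}}c_u\,\mixCan{u}{y}\in\mathcal{M}$, which finishes the verification.

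The only genuinely delicate ingredients are the trichotomy (a)--(c) — concretely, the fact that $ys\notin\pre{I}{W}$ forces $ys=ry$ with $r\in I$ and $\ell(ys)=\ell(y)+1$, which is the combinatorial heart of the argument — and the compatibility of the $p$-canonical basis with the standard parabolic subalgebra $\heck[(W_I,I)]$; both are known. Everything else is a short manipulation of the Hecke relations together with the definition of $\cle{R}$, exactly as in Geck's treatment of the Kazhdan--Lusztig case in \cite{GeIndKLCells}.
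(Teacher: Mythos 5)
Your argument is correct and follows exactly the same route as the paper's own proof: reduce to right multiplication by a single $\std{s}$, apply Deodhar's trichotomy, dispatch the two easy cases by the quadratic relation, and in the third case rewrite $\std{y}\std{s}=\std{t}\std{y}$ with $t=ysy^{-1}\in I$, expand $\pcan{x}\std{t}$ in the $p$-canonical basis of $\heck[(W_I,I)]$, and invoke the closedness of $\mathcal{J}$ under $\cle{R}$. You are slightly more explicit than the paper in flagging that $\pcan{x}$ for $x\in W_I$ agrees with the $p$-canonical basis element of the parabolic subalgebra $\heck[(W_I,I)]$, which the paper uses tacitly; this is the right thing to point out and does not change the substance.
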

\begin{proof}
   Since $\heck$ is generated by $\std{s}$ for $s \in S$ as an algebra, it is
   enough to check that $\pcan{x}\std{y}\std{s} \in \mathcal{M}$ for all
   $x \in W_I$ and $y \in \pre{I}{W}$. Deodhar's lemma (see 
   \cite[Lemma 2.1.2]{GPChFinCoxGrps}) shows that there are three cases to consider:
   \begin{enumerate}
      \item $ys \in \pre{I}{W}$ and $\ell(ys) > \ell(y)$. In this case, we have 
            $\pcan{x} \std{y} \std{s} = \pcan{x} \std{ys} \in \mathcal{M}$ as desired.
      \item $ys \in \pre{I}{W}$ and $\ell(ys) < \ell(y)$. Then
            $\pcan{x} \std{y} \std{s} = \pcan{x} (\std{ys} + (v^{-1} - v)\std{y}) 
            \in \mathcal{M}$.
      \item $ys \notin \pre{I}{W}$. For $t = ysy^{-1} \in I$ it follows that
            $\ell(ys) = \ell(y) + 1 = \ell(ty)$ and thus
            \[ \pcan{x} \std{y} \std{s} = \pcan{x} \std{ys}
               = \pcan{x} \std{ty} = \pcan{x} \std{t} \std{y} \text{.} \]
            The definition of right $p$-cells implies that $\pcan{x} \std{t}$
            is a linear combination of terms $\pcan{z}$ with $z \cle{R} x$.
            Our assumption then ensures that $\pcan{x} \std{y} \std{s}$ is a
            linear combination of terms $\pcan{u} \std{y}$ with $u \in \mathcal{J}$.
            This concludes the proof.\qedhere
   \end{enumerate}
\end{proof} 

As in \cite[\S3]{GeIndKLCells} we introduce the following hybrid preorder
on $W$:
\begin{defn}
   Let $x, u \in W_I$ and $y, w \in \pre{I}{W}$. We write $xy \mlt{I, R} uw$ if
   $x \cle{R} u$ in the right $p$-cell preorder and $y < w$ in the Bruhat order.
   We write as well $xy \mle{I, R} uw$ if $xy \mlt{I, R} uw$ or $xy = uw$.
\end{defn}

A crucial ingredient in Geck's proof is \cite[Proposition 3.3]{GeIndKLCells}.
For its proof Geck uses the characterization of the Kazhdan-Lusztig basis
element $\kl{w}$ as the unique self-dual element in $\std{x} + \sum_{y < x} v\Z[v] \std{y}$.
For this reason, his proof does not work for the $p$-canonical basis. Since Geck
works in the unequal parameter case, he cannot rely on positivity properties
which will allow us to conclude.

\begin{prop}
   \label{propPCanInIhybrid}
   We have for $x \in W_I$ and $y \in \pre{I}{W}$:
   \[ \pcan{xy} = \pcan{x} \std{y} + 
         \sum_{\substack{u \in W_I,\; w \in \pre{I}{W}\\uw \mlt{I, R} xy}}
         \p{r}^I_{uw, xy} \pcan{u} \std{w} \]
   In particular, the polynomial $\p{r}^I_{uw, xy}$ vanishes unless $uw \mle{I, R} xy$.
\end{prop}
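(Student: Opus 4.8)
The plan is to prove the displayed identity by showing that the base change matrix $\bigl(\p{r}^I_{uw,xy}\bigr)$ is unitriangular with respect to $\mle{I,R}$. I would separate this into a \emph{Bruhat} part (controlling the $\pre{I}{W}$-coordinate $w$) and a \emph{cell} part (controlling the $W_I$-coordinate $u$); the former is essentially as in Geck \cite{GeIndKLCells}, while the latter will use the positivity of \cref{propPos} exactly where Geck uses the self-duality of $\kl{w}$.

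\emph{Bruhat part.} First I would show $\p{r}^I_{uy,xy}=\delta_{u,x}$. Expanding $\pcan{xy}$ in the standard basis in two ways — directly via \cref{propPCanProps}(i), and through the $I$-hybrid basis using $\pcan{u}=\sum_{v\leqslant u}\p{h}_{v,u}\std{v}$ (all such $v$ lying in $W_I$ when $u\in W_I$) together with $\std{v}\std{w}=\std{vw}$ for $v\in W_I$, $w\in\pre{I}{W}$ — and comparing coefficients of $\std{u_0y}$ for $u_0\in W_I$ gives
\[ \p{h}_{u_0y,\,xy} \;=\; \sum_{u\in W_I}\p{r}^I_{uy,xy}\,\p{h}_{u_0,u}. \]
By \cref{corParPKLPols} the left-hand side equals $\p{h}_{u_0,x}$, so comparing with the $x$-th column of the unitriangular matrix $(\p{h}_{u_0,u})_{u_0,u\in W_I}$ forces $\p{r}^I_{uy,xy}=\delta_{u,x}$; in particular $\p{r}^I_{xy,xy}=1$. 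Second, since $\mixCan{u}{w}\in\std{uw}+\sum_{v<uw}\Z[v,v^{-1}]\std{v}$ while $\pcan{xy}\in\std{xy}+\sum_{v<xy}\Z[v,v^{-1}]\std{v}$, the expansion of $\pcan{xy}$ in the $I$-hybrid basis only involves $\mixCan{u}{w}$ with $uw\leqslant xy$; as the projection $W\to\pre{I}{W}$ onto minimal coset representatives is order-preserving for the Bruhat order \cite{BBCoxGrps}, this forces $w\leqslant y$, and together with the first point, $\p{r}^I_{uw,xy}\neq 0$ with $(u,w)\neq(x,y)$ forces $w<y$.

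\emph{Cell part.} It remains to show $\p{r}^I_{uw,xy}\neq 0\Rightarrow u\cle{R}x$, i.e. that $\pcan{xy}$ lies in
\[ \mathcal{M}_x \;:=\; \langle\, \pcan{u}\std{w} \;\vert\; u\in W_I,\ u\cle{R}x,\ w\in\pre{I}{W} \,\rangle_{\Z[v,v^{-1}]}, \]
which is a right ideal of $\heck$ by \cref{lemIdeal} applied to the order ideal $\{u\in W_I\mid u\cle{R}x\}$ and is spanned by a subset of the $I$-hybrid basis. I would induct on $\ell(xy)=\ell(x)+\ell(y)$. If $y=\Wid$ then $\pcan{xy}=\pcan{x}=\pcan{x}\std{\Wid}\in\mathcal{M}_x$. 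If $y\neq\Wid$, pick $s\in S$ with $ys<y$ and set $y'=ys\in\pre{I}{W}$ (minimal coset representatives are closed under right divisors \cite{BBCoxGrps}); then $xy=xy's$ with $\ell(xy)=\ell(xy')+1$. Writing $\pcan{xy'}\pcan{s}=\sum_z c_z\pcan{z}$, the structure constants satisfy $c_z\in\Z_{\geqslant 0}[v,v^{-1}]$ (non-negativity of the $p$-canonical structure constants \cite{JW}), and the standard-basis computation $\pcan{xy'}\pcan{s}\in\std{xy}+\sum_{v<xy}\Z[v,v^{-1}]\std{v}$ gives $c_{xy}=1$ and $c_z=0$ unless $z\leqslant xy$. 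By induction $\pcan{xy'}\in\mathcal{M}_x$, hence $\pcan{xy'}\pcan{s}\in\mathcal{M}_x$. Now fix $z<xy$ with $c_z\neq 0$ and write $z=z_0z_1$ with $z_0\in W_I$, $z_1\in\pre{I}{W}$; since $\ell(z)<\ell(xy)$, induction gives $\pcan{z}\in\mathcal{M}_{z_0}$ with $\pcan{z_0}\std{z_1}$ occurring with coefficient $1$. If $z_0\not\cle{R}x$, then $\pcan{z_0}\std{z_1}\notin\mathcal{M}_x$, so the $\pcan{z_0}\std{z_1}$-coordinate of $\pcan{xy'}\pcan{s}$ — which equals $\sum_{z'}c_{z'}\,\p{r}^I_{z_0z_1,\,z'}$ — vanishes; every summand is $\geqslant 0$ by \cref{propPos}, so each vanishes, and in particular $c_z\cdot 1=0$, a contradiction. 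Hence $z_0\cle{R}x$ for every such $z$, so $\pcan{z}\in\mathcal{M}_{z_0}\subseteq\mathcal{M}_x$, and therefore $\pcan{xy}=\pcan{xy'}\pcan{s}-\sum_{z<xy}c_z\pcan{z}\in\mathcal{M}_x$. Combining the two parts yields the Proposition.

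The hard step is the cell part: a priori nothing prevents the $I$-hybrid expansion of $\pcan{xy}$ from involving vectors $\pcan{u}\std{w}$ with $u\not\cle{R}x$, and without self-duality one cannot argue the way Geck does. The point is that once $\pcan{xy}$ is written as $\pcan{xy'}\pcan{s}$ minus a correction term, every coefficient that could feed such a vector — the structure constants $c_z$ (non-negative because $p$-canonical basis elements are characters of parity complexes) and the base change coefficients $\p{r}^I$ (non-negative by \cref{propPos}, which comes from hyperbolic localization of parity sheaves) — is non-negative, so the vanishing of a sum of such terms forces each term to vanish: no cancellation can conceal an offending basis vector.
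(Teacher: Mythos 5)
Your proof is correct, and the key ingredients match the paper's: the right ideal $\mathcal{M}$ of Lemma~\ref{lemIdeal}, the positivity of Proposition~\ref{propPos}, Proposition~\ref{corParPKLPols}, and the order-preserving projection $W\to\pre{I}{W}$. You also correctly identify that positivity is the substitute for Geck's self-duality. Your Bruhat part agrees with the paper's modulo a cosmetic difference: you obtain $w\leqslant y$ from the fact that the inverse of a Bruhat-unitriangular matrix is again Bruhat-unitriangular, whereas the paper deduces it from positivity of $\p{h}$ and $\p{r}^I$; both work, yours is if anything slightly tidier at this one step. The substantive difference is the cell part. You prove $\pcan{xy}\in\mathcal{M}_x$ by induction on $\ell(xy)$, peeling off $\pcan{s}$ one letter at a time and using a positivity contradiction to show that each $\pcan{z}$ occurring in $\pcan{xy'}\pcan{s}$ already lies in $\mathcal{M}_x$. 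The paper bypasses the induction entirely: $\pcan{x}=\pcan{x}\std{\Wid}$ is one of the spanning vectors of $\mathcal{M}$, so $\pcan{x}\pcan{y}\in\mathcal{M}$ since $\mathcal{M}$ is a right ideal; the length condition $\ell(xy)=\ell(x)+\ell(y)$ gives $\pcan{x}\pcan{y}=\pcan{xy}+\sum_{z<xy}m_z\pcan{z}$ with $m_z\in\N[v,v^{-1}]$; and by Proposition~\ref{propPos} the $I$-hybrid expansions of the summands cannot cancel, so each summand — in particular $\pcan{xy}$ — individually has vanishing $\pcan{u}\std{w}$-coefficient for every $u\not\cle{R}x$, i.e.\ lies in $\mathcal{M}$. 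Your inductive argument applies the same no-cancellation principle, just once per intermediate $z$ rather than once globally, and therefore needs the inductive hypothesis (and its base case, and Deodhar's lemma to ensure $ys\in\pre{I}{W}$); the paper's one-shot version is leaner and is essentially the reason $\mathcal{M}$ was set up as a right ideal in the first place. Both proofs are valid.
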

\begin{proof}
   Let $x \in W_I$ and $y \in \pre{I}{W}$. Consider for 
   $\mathcal{J} = \{u \in W_I \; \vert \; u \cle{R} x\}$ the right ideal 
   $\mathcal{M} \subset \heck$ constructed as in \cref{lemIdeal}. Clearly, 
   $\pcan{x} \pcan{y}$ lies in $\mathcal{M}$ (as $\pcan{x} \in \mathcal{M}$).
   Since $\ell(xy) = \ell(x) + \ell(y)$, it follows that
   \begin{equation}
      \label{eqPCanProd}
      \pcan{x} \pcan{y} = \pcan{xy} + \sum_{z < xy} m_z \pcan{z} \text{.}
   \end{equation}
   for some $m_z\in \N[v,v^{-1}]$.
   \cref{propPos} implies that when expressing the $p$-canonical basis elements
   on the right hand side of \eqref{eqPCanProd} in the $I$-hybrid basis, there cannot be
   any cancellations. Therefore, we see that $\pcan{xy}$ lies in $\mathcal{M}$.   
   This means that we can express $\pcan{xy}$ as follows
   \begin{align*}
      \pcan{xy} &= \sum_{\substack{u \in W_I,\; w \in \pre{I}{W}\\u \cle{R} x}}
                   \p{r}^I_{uw, xy} \pcan{u} \std{w} 
   \end{align*}
   with $\p{r}^I_{uw, xy}\in \N[v,v^{-1}]$. It remains to show that  
   $\p{r}^I_{uw, xy}=0$ unless $uw = xy$ or $w<y$.
   Expanding in the standard basis (see \Cref{propPCanProps} (i)) we have
   \[
    \pcan{xy}= \sum_{\substack{z \leqslant u \in W_I,\; w \in \pre{I}{W}\\u \cle{R} x}}
                  \p{r}^I_{uw, xy} \p{h}_{z, u} \std{zw}.
   \]   
	By comparing coefficients in the standard basis we get
	\begin{equation}
      \label{eqBCCoeffs}
      \p{h}_{zw,xy}=\sum_{u\in W_I} \p{r}^I_{uw, xy} \p{h}_{z, u} \text{.}
   \end{equation}
	Since both the $p$-Kazhdan-Lusztig polynomials $\p{h}_{z, u}$ and the 
    polynomials $\p{r}^I_{uw, xy}$ have non-negative coefficients, 
    $\p{r}^I_{uw, xy}\neq 0$ for some $w \not \leq y$ and $u \in W_I$ implies 
    $\p{h}^I_{zw, xy}\neq 0$. But if $w \not\leq y$, then also 
   $zw\not\leq xy$ because the quotient map $W\rightarrow {}^IW$ is a strict morphism of posets (see \cite[Lemma 2.2]{DoInvForm}), 
   contradicting \Cref{propPCanProps}(i).
   
   Suppose $w=y$. \Cref{corParPKLPols} together with \eqref{eqBCCoeffs} implies that 
   $\p{r}_{xy,xy}^I = 1$ and that $\p{r}_{uy,xy}^I$ vanishes for $u < x$.
   This concludes the proof.
%
\end{proof}

\begin{cor}
   \label{corIdealPCanSpans}
   Let $\mathcal{J} \subseteq W_I$ and $\mathcal{M}$ be as in \cref{lemIdeal}.
   Then we have
   \[ \mathcal{M} = \langle \pcan{xy} \; \vert \; x \in \mathcal{J}, y \in \pre{I}{W}
      \rangle_{\Z[v, v^{-1}]} \text{.} \]
\end{cor}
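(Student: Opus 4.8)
The plan is to read off the corollary from \cref{propPCanInIhybrid}, using nothing beyond the defining property of $\mathcal{J}$ (closure under $\cle{R}$). Write $\mathcal{N} \defeq \langle \pcan{xy} \mid x \in \mathcal{J},\ y \in \pre{I}{W}\rangle_{\Z[v,v^{-1}]}$ for the claimed right-hand side; I will establish $\mathcal{M} = \mathcal{N}$ by proving the two inclusions separately.

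First, for $\mathcal{N} \subseteq \mathcal{M}$: given $x \in \mathcal{J}$ and $y \in \pre{I}{W}$, \cref{propPCanInIhybrid} expands
\[
   \pcan{xy} = \pcan{x}\std{y} + \sum_{\substack{u \in W_I,\ w \in \pre{I}{W}\\ uw \mlt{I, R} xy}} \p{r}^I_{uw, xy}\,\pcan{u}\std{w},
\]
and every index $u$ occurring on the right satisfies $u \cle{R} x$, hence $u \in \mathcal{J}$ by the hypothesis on $\mathcal{J}$. So every summand lies in $\mathcal{M}$, and therefore $\pcan{xy} \in \mathcal{M}$.

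Next, for $\mathcal{M} \subseteq \mathcal{N}$: since the elements $\pcan{x}\std{y}$ with $x \in \mathcal{J}$, $y \in \pre{I}{W}$ span $\mathcal{M}$, it suffices to show each of them lies in $\mathcal{N}$, which I would prove by induction on $\ell(y)$. Rearranging the identity above gives
\[
   \pcan{x}\std{y} = \pcan{xy} - \sum_{\substack{u \in W_I,\ w \in \pre{I}{W}\\ uw \mlt{I, R} xy}} \p{r}^I_{uw, xy}\,\pcan{u}\std{w};
\]
the term $\pcan{xy}$ lies in $\mathcal{N}$ because $x \in \mathcal{J}$, and in each summand $uw \mlt{I, R} xy$ forces $u \cle{R} x$ (so $u \in \mathcal{J}$) and $w < y$ in the Bruhat order (so $\ell(w) < \ell(y)$, and the inductive hypothesis applies to $\pcan{u}\std{w}$). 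Hence $\pcan{x}\std{y} \in \mathcal{N}$, completing the induction and hence the proof that $\mathcal{M} = \mathcal{N}$.

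I do not expect a real obstacle: the corollary is a formal consequence of the $\mlt{I, R}$-bounded, unitriangular change of basis provided by \cref{propPCanInIhybrid}, combined with the fact that the ``$p$-cell ideal'' hypothesis on $\mathcal{J}$ is exactly what keeps all the indices $u$ appearing in these expansions inside $\mathcal{J}$. The one point meriting attention is the choice of induction parameter: one inducts on $\ell(y)$ (equivalently, the Bruhat order on $\pre{I}{W}$) because $\mlt{I, R}$ strictly decreases the $\pre{I}{W}$-component, whereas the $\cle{R}$-component is only a preorder and would not support an induction on its own.
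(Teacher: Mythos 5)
Your proof is correct and takes essentially the same approach as the paper: both reduce everything to the unitriangularity statement in \cref{propPCanInIhybrid}. The paper's proof simply invokes that the base change matrix (and hence its inverse) is unitriangular with respect to any total order refining $\mle{I,R}$; your induction on $\ell(y)$ is a concrete unwinding of that inverse-matrix argument, and your closing remark about why $\ell(y)$ is the right induction parameter --- the Bruhat component of $\mlt{I,R}$ is strict and well-founded while $\cle{R}$ is only a preorder --- accurately identifies the point that makes the paper's appeal to unitriangularity legitimate.
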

\begin{proof}
   Let $x \in \mathcal{J}$ and $y \in \pre{I}{W}$. \cref{propPCanInIhybrid}
   shows that $\pcan{xy}$ lies in $\mathcal{M}$.

   Moreover, \cref{propPCanInIhybrid} shows that in any total order refining 
   the preorder $\mle{I, R}$ the base change matrix between the $p$-canonical 
   basis and the $I$-hybrid basis is uni-triangular Since its inverse is of 
   the same form, the claim follows.
\end{proof}

The proof of our main result is analogous to \cite[\S4]{GeIndKLCells}.
We will give a complete proof for the reader's convenience:

\begin{thm}
   \label{thmPreorderComp}
   Let $x, u \in W_I$ and $y, w \in \pre{I}{W}$. Then we have:
   \[ uw \cle{R} xy \quad \Rightarrow \quad u \cle{R} x \text{ in } W_I \]
   In particular, the following holds:
   \[ uw \ceq{R} xy \quad \Rightarrow \quad u \ceq{R} x \text{ in } W_I \]
\end{thm}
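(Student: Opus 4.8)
The plan is to derive \cref{thmPreorderComp} directly from \cref{corIdealPCanSpans}, whose force is that it exhibits the right ideals of \cref{lemIdeal} as $\Z[v,v^{-1}]$-spans of \emph{subsets of the $p$-canonical basis}. First I would fix $x \in W_I$ and put $\mathcal{J} = \{u \in W_I \mid u \cle{R} x \text{ in } W_I\}$. Reflexivity and transitivity of the right $p$-cell preorder on $W_I$ show that $\mathcal{J}$ satisfies the hypothesis of \cref{lemIdeal} (and in particular that $x \in \mathcal{J}$), so the $\Z[v,v^{-1}]$-submodule $\mathcal{M} = \langle \pcan{x'}\std{y'} \mid x' \in \mathcal{J},\, y' \in \pre{I}{W} \rangle$ is a right ideal of $\heck$. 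By \cref{corIdealPCanSpans} we may rewrite $\mathcal{M}$ as the $\Z[v,v^{-1}]$-span of $\{\pcan{x'y'} \mid x' \in \mathcal{J},\, y' \in \pre{I}{W}\}$; since the $p$-canonical elements form a basis of $\heck$, this realises $\mathcal{M}$ as the free submodule on a subset of that basis, so the coefficient of any $\pcan{z}$ with $z \notin \mathcal{J}\cdot \pre{I}{W}$ vanishes in every element of $\mathcal{M}$.

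Now suppose $uw \cle{R} xy$ in $W$, with $u \in W_I$ and $w \in \pre{I}{W}$. By definition of the right $p$-cell preorder there is $h \in \heck$ such that $\pcan{uw}$ appears with non-zero coefficient in $\pcan{xy}\,h$. As $x \in \mathcal{J}$ we have $\pcan{xy} \in \mathcal{M}$, and since $\mathcal{M}$ is a right ideal, $\pcan{xy}\,h \in \mathcal{M}$. By the previous paragraph the non-vanishing of the $\pcan{uw}$-coefficient forces $uw \in \mathcal{J}\cdot\pre{I}{W}$, i.e. $uw = x'y'$ with $x' \in \mathcal{J}$ and $y' \in \pre{I}{W}$; the bijection $W_I \times \pre{I}{W} \overset{\sim}{\longrightarrow} W$ then yields $u = x' \in \mathcal{J}$, which is exactly $u \cle{R} x$ in $W_I$. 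The ``in particular'' statement follows at once by applying this implication to $uw \cle{R} xy$ and to $xy \cle{R} uw$.

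I do not expect a genuine obstacle here: the entire weight of the argument has been shifted into \cref{corIdealPCanSpans}, which in turn rests on the positivity statement \cref{propPos} and the unitriangularity statement \cref{propPCanInIhybrid}. That is precisely the step upgrading \cref{lemIdeal} from ``$\mathcal{M}$ is a right ideal spanned by $I$-hybrid basis elements'' to ``$\mathcal{M}$ is the coordinate submodule of $\heck$ cut out by a subset of the $p$-canonical basis'', and it is positivity — the absence of cancellations when re-expanding $\pcan{x}\pcan{y}$ — that makes this identification possible. Once that input is granted, the proof of the theorem is the short formal argument above.
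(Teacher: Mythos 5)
Your proof is correct and takes essentially the same route as the paper: you form the same right ideal $\mathcal{M}$ from $\mathcal{J} = \{u \in W_I \mid u \cle{R} x\}$ via \cref{lemIdeal}, and invoke \cref{corIdealPCanSpans} to identify $\mathcal{M}$ with the coordinate submodule of $\heck$ spanned by $\{\pcan{x'y'} : x' \in \mathcal{J},\, y' \in \pre{I}{W}\}$, so that membership in $\mathcal{M}$ controls which $p$-canonical coefficients can be non-zero. The only cosmetic difference is that the paper first reduces to the single generators $\std{s}$ before applying this argument, whereas you work directly with an arbitrary $h \in \heck$; since $\mathcal{M}$ is closed under right multiplication by all of $\heck$, both formulations yield the same conclusion.
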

\begin{proof}
   It is enough to consider the case where $\pcan{uw}$ occurs with non-zero
   coefficient in $\pcan{xy} \std{s}$ for some $s \in S$.
   Consider the set $\mathcal{J} = \{z \in W_I \; \vert \; z \cle{R} x \}$.
   Since $\mathcal{J}$ satisfies the requirements in \cref{lemIdeal},
   it follows that $\mathcal{M} = \langle \pcan{a} \std{b} \; \vert \; 
   a \in \mathcal{J}, b \in \pre{I}{W} \rangle \subseteq \heck$ is a right ideal.
   
   \cref{corIdealPCanSpans} shows that $\pcan{xy}$ lies in $\mathcal{M}$.
   Since $\mathcal{M}$ is a right ideal, the element $\pcan{xy} \std{s}$ also
   lies in $\mathcal{M}$. From \cref{corIdealPCanSpans} it follows that we 
   can write
   \begin{equation}
      \label{eqPCanExp}
      \pcan{xy} \std{s} = \sum_{a \in \mathcal{J},\; b \in \pre{I}{W}} 
         \gamma_{xy, s}^{ab} \pcan{ab}
   \end{equation}
   with $\gamma_{xy, s}^{ab} \in \Z[v, v^{-1}]$ for all 
   $a \in \mathcal{J},\; b \in \pre{I}{W}$. Observe that the right-hand side
   of \eqref{eqPCanExp} is the expansion of $\pcan{xy} \std{s}$ in the $p$-canonical
   basis. Therefore, our assumption that $\pcan{uw}$ occurs with non-zero coefficient
   in $\pcan{xy} \std{s}$ means that $u \in \mathcal{J}$. This in turn implies
   $u \cle{R} x$ by the definition of $\mathcal{J}$ and finishes the proof of the
   theorem.
\end{proof}

Before we can state our main result, we should recall the definition of a cell module.
\begin{defn}
   For any right $p$-Cell $C \subseteq W$, write $w \cle{R} C$ (resp. $w \clt{R} C$) 
   if there exists an element $y \in C$ such that $w \cle{L} y$ (and $w \not\in C$). 
   By the definition of the right $p$-cell preorder, we can define right $\heck$-modules
   \[ \heck[{\cle{R} C}] \defeq \bigoplus_{w \cle{R} C} \Z[v, v^{-1}] \pcan{w} \]
   and similarly $\heck[{\clt{R} C}]$. Then the right $p$-cell module associated 
   to $C$ is defined as the right $\heck$-module given by the quotient 
   \[ \heck[C] \defeq \heck[{\cle{R} C}] / \heck[{\clt{R} C}] \text{.}\]
\end{defn}

\begin{thmlab}[Parabolic induction for right $p$-cells]
   Let $C$ be a right $p$-cell of $W_I$ and $\heck[C]$ the corresponding
   right cell module of $\heck[(W_I, I)]$.
   Then $C\cdot \pre{I}{W}$ is a union of right $p$-cells of $W$.
   The right module of $\heck[(W, S)]$ associated to $C\cdot \pre{I}{W}$
   is isomorphic to $\heck[C] \otimes_{\heck[(W_I, I)]} \heck[(W, S)]$.
\end{thmlab}
\begin{proof}
   The first part is a corollary of \Cref{thmPreorderComp} and the proof
   of the second part works exactly like the proof of 
   \cite[Lemma 5.2]{GeCellsConstrReps}.
\end{proof}

\subsection{An example: finite type \texorpdfstring{$C_3$}{C3} for \texorpdfstring{$p = 2$}{p = 2}}

In this section, we illustrate \cref{thmPreorderComp} in finite type $C_3$ 
where Kazhdan-Lusztig cells do not decompose into $p$-cells for $p = 2$ 
(see \cite[\S3.4.3]{JeABC}). We will use the following 
Cartan matrix as input and label the simple reflections accordingly. 
One may obtain a Kac-Moody root datum satisfying our assumptions 
from any based root datum of the corresponding connected semi-simple 
simply-connected algebraic group.

\[\begin{tikzpicture}[auto, baseline=(1.base)]
   \draw (1,0) -- (0,0);
   \draw (0,\edgeShift) -- (-1,\edgeShift);
   \draw (0,-\edgeShift) -- (-1,-\edgeShift);
   \path (0,0) to node[Greater] (mid) {} (-1,0);
   \draw (mid.center) to +(30:\wingLen);
   \draw (mid.center) to +(330:\wingLen);
   \node [DynNode] (1) at (-1,0) {$\color{red}{1}$};
   \node [DynNode] (2) at (0,0) {$\color{blue}{2}$};
   \node [DynNode] (3) at (1,0) {$\color{yellow}{3}$};
\end{tikzpicture} 
\Leftrightarrow \text{Cartan matrix: }
\begin{pmatrix}
   2 & -2 & 0 \\
   -1 & 2 & -1 \\
   0 & -1 & 2
\end{pmatrix}
\]
It should be noted that our input is Langlands' dual to the one given in
\cite[\S3.4.3]{JeABC} as we work on the geometric side and not with diagrammatic
Soergel bimodules.

Explicit computer calculation gives the following Kazhdan-Lusztig right cells
(as in \cite[\S3.4.3]{JeABC}):

\begin{align*}
   C_0 &= \{ \id \} \\
   C_1 &= \{ 1, 12, 121, 123 \} \\
   C_2 &= \{ 2, 21, 23, 212, 2123 \} \\
   C_3 &= \{ 3, 32, 321, 3212, 32123 \} \\
   C_4 &= \{ 13, 132, 1321 \}\\
   C_5 &= \{ 213, 2132, 21321 \} \\
   C_6 &= \{ 232, 2321, 23212 \} \\
   C_{7} &= \{ 2121, 21213, 212132, 2121321, 21213213 \} \\   
   C_8 &= \{ 1213, 12132, 121321 \} \\
   C_9 &= \{ 1232, 12321, 123212 \} \\
   C_{10} &= \{ 13212, 132123, 1213212, 1232123, 12132123 \} \\
   C_{11} &= \{ 21232, 212321, 2123212 \} \\
   C_{12} &= \{ 232123, 232121, 2321213, 23212132 \} \\
   C_{13} &= \{ w_0 \}
\end{align*}

For $p=2$ these right Kazhdan-Lusztig cells exhibit the following decomposition
behavior into right $p$-cells:
\begin{align*}
   C_2 &= \underbrace{\{ 2, 23\}}_{{}^2 C_{2'}} \cup \underbrace{\{21, 212, 2123 \}}_{{}^2 C_{2''}} \\
   C_3 &= \underbrace{\{ 3, 32\}}_{{}^2 C_{3'}} \cup \underbrace{\{ 321, 3212, 32123 \}}_{{}^2 C_{3''}} \\
   C_6 \cup C_{12} &= \underbrace{\{ 232 \}}_{{}^2 C_6} \cup \underbrace{\{ 2321, 23212, 232123 \}}_{{}^2 C_{6/12}}
                     \cup \underbrace{\{ 232121, 2321213, 23212132 \}}_{{}^2 C_{12}} \\
   C_i &= {}^2 C_i \text{ for } i \in \{0, \dots, 13\} \setminus \{2,3,6,12\}
\end{align*}

The Hasse-diagrams of the cell preorders look as follows. We display Kazhdan-Lusztig
right cells on the left and right $p$-cells on the right. In these diagrams the cells 
that are depicted at one height form a two-sided cell.
\[
\begin{tikzpicture}[auto, baseline=(current  bounding  box.center)]
   \node (0) at (0, 2.5) {$C_0$};
   \node (1) at (-1, 1.5) {$C_1$};
   \node (2) at (1, 1.5) {$C_2$};
   \node (3) at (0, 1.5) {$C_3$};
   \node (4) at (-0.5, 0.5) {$C_4$};
   \node (5) at (1, 0.5) {$C_5$};
   \node (8) at (-1.5, 0.5) {$C_8$};
   \node (9) at (-1.5, -0.5) {$C_9$};
   \node (11) at (1, -0.5) {$C_{11}$};
   \node (6) at (0, -0.5) {$C_6$};
   \node (10) at (-1, -1.5) {$C_{10}$};
   \node (7) at (0, -1.5) {$C_7$};
   \node (12) at (1, -1.5) {$C_{12}$};
   \node (13) at (0, -2.5) {$C_{13}$};
   \draw (0) to (1);
   \draw (0) to (2);
   \draw (0) to (3);
   \draw (1) to (8);
   \draw (1) to (4);
   \draw (3) to (4);
   \draw (3) to (6);
   \draw (2) to (5);
   \draw (4) to (9);
   \draw (5) to (6);
   \draw (5) to (11);
   \draw (8) to (9);
   \draw[bend left=10] (8) to (7);
   \draw (6) to (12);
   \draw (9) to (10);
   \draw (11) to (12);
   \draw (11) to (7);
   \draw (10) to (13);
   \draw (7) to (13);
   \draw (12) to (13);
\end{tikzpicture}
\quad
\begin{tikzpicture}[auto, baseline=(current  bounding  box.center)]
   \node (0) at (0, 3.5) {${}^2 C_0$};
   \node (3') at (0, 2.5) {${}^2 C_{3'}$};
   \node (2') at (1, 2.5) {${}^2 C_{2'}$};
   \node (1) at (-1.5, 1.5) {${}^2 C_1$};
   \node (3'') at (-0.5, 1.5) {${}^2 C_{3''}$};
   \node (2'') at (1.5, 1.5) {${}^2 C_{2''}$};
   \node (8) at (-2.0, 0.5) {${}^2 C_8$};
   \node (4) at (-1.0, 0.5) {${}^2 C_4$};
   \node (5) at (1.5, 0.5) {${}^2 C_5$};   
   \node (6) at (0.5, -0.5) {${}^2 C_6$};
   \node (9) at (-1.5, -1.5) {${}^2 C_9$};
   \node (6/12) at (0, -1.5) {${}^2 C_{6/12}$};
   \node (11) at (1.5, -1.5) {${}^2 C_{11}$};
   \node (10) at (-1.5, -2.5) {${}^2 C_{10}$};
   \node (7) at (-0.5, -2.5) {${}^2 C_7$};
   \node (12) at (1, -2.5) {${}^2 C_{12}$};
   \node (13) at (0, -3.5) {${}^2 C_{13}$};
   \draw (0) to (1);
   \draw (0) to (2');
   \draw (0) to (3');
   \draw (3') to (3'');
   \draw (3') to (6);
   \draw (2') to (2'');
   \draw (2') to (6);
   \draw (1) to (8);
   \draw (1) to (4);
   \draw (3'') to (4);
   \draw (3'') to (6/12);
   \draw (2'') to (5);
   \draw (4) to (9);
   \draw (5) to (11);
   \draw (8) to (9);
   \draw[bend left=10] (8) to (7);
   \draw (6) to (6/12);
   \draw (6/12) to (12);
   \draw (9) to (10);
   \draw (11) to (12);
   \draw (11) to (7);
   \draw (10) to (13);
   \draw (7) to (13);
   \draw (12) to (13);
\end{tikzpicture}\]

We will choose $I = \{1, 2\}$ so that $W_I$ is a finite Coxeter group of type $B_2$
and we get \[{}^I W = \{\id, 3, 32, 321, 3212, 32123 \}\text{.}\]
For $p = 2$ the Kazhdan-Lusztig right cell $\{2, 21, 212\}$ in $W_I$ decomposes
into two right $p$-cells $\{2\} \cup \{21, 212\}$ (see \cite[\S 3.4.1]{JeABC}).

For the right Kazhdan-Lusztig cells $C = \{2\}$ and $C' = \{21, 212\}$ in $W_I$ we get:
\begin{align*}
  C \cdot {}^I W = &\underbrace{\{ 2, 23\}}_{{}^2 C_{2'}} \cup \underbrace{\{232 \}}_{{}^2 C_{6}} 
		\cup \underbrace{\{2321, 23212, 232123 \}}_{{}^2 C_{6/12}} \\
  C' \cdot {}^I W = &\underbrace{\{ 21, 212, 2123\}}_{{}^2 C_{2''}} \cup \underbrace{\{213, 2132, 21321 \}}_{{}^2 C_{5}} 
		\cup \underbrace{\{232121, 2321213, 23212132 \}}_{{}^2 C_{12}} \\
		&\cup \underbrace{\{21232, 212321, 2123212 \}}_{{}^2 C_{11}}
\end{align*}

Observe that neither $C \cdot {}^I W$ nor $C' \cdot {}^I W$ is a union of
right Kazhdan-Lusztig cells.

\printbibliography

\Address

\end{document}